\newtheorem{thm}{Theorem}[subsection]
\newtheorem{lem}[thm]{Lemma}
\newtheorem*{nwtthm}{No Wandering Triangles Theorem}
\newtheorem*{mathe}{Main theorem}
\theoremstyle{definition}
\newtheorem{defn}[thm]{Definition}
\theoremstyle{remark}
\newtheorem{rem}[thm]{Remark}
\numberwithin{equation}{section}
\numberwithin{figure}{subsection}
\newcommand{\A}{\mathcal A}
\newcommand{\B}{\mathcal B}
\newcommand{\C}{\mathcal C}
\newcommand{\AP}{\mathcal A\mathcal P}
\newcommand{\qml}{\mathrm{QML}}
\newcommand{\WT}{\mathcal W\mathcal T}
\newcommand{\bLa}{\widehat{\mathcal L}}
\newcommand{\La}{\mathcal L}
\newcommand{\lam}{\mathcal L}
\newcommand{\ma}{\mathbf{a}}
\newcommand{\mc}{\mathbf{c}}
\newcommand{\md}{\mathbf{d}}
\newcommand{\mg}{\mathbf{g}}
\newcommand{\mell}{ {\boldsymbol \ell} }
\newcommand{\mm}{\mathbf{m}}
\renewcommand{\mp}{\mathbf{p}}
\newcommand{\mq}{\mathbf{q}}
\newcommand{\mr}{\mathbf{r}}
\newcommand{\ms}{\mathbf{s}}
\newcommand{\mt}{\mathbf{t}}
\newcommand{\mx}{\mathbf{x}}
\newcommand{\my}{\mathbf{y}}
\newcommand{\mz}{\mathbf{z}}
\newcommand{\T}{\mathbf{T}}
\newcommand{\complex}{\mathbb C}
\newcommand{\rsphere}{\widehat\complex}
\newcommand{\disk}{\mathbb D}
\newcommand{\Pro}{\mathcal P}
\newcommand{\reals}{\mathbb R}
\newcommand{\uc}{\mathbb S ^1}
\newcommand{\ints}{\mathbb Z}
\newcommand{\ch}{\mbox{$\mathrm{Ch}$}}
\DeclareMathOperator{\dom}{Dom}
\DeclareMathOperator{\itin}{itin}
\DeclareMathOperator{\monitin}{itin_{mon}}
\newcommand{\mon}{\mathrm{mon}}
\newcommand{\ph}{\varphi}
\newcommand{\si}{\sigma}
\newcommand{\Ta}{\Theta}
\newcommand{\ta}{\theta}
\newcommand{\al}{\alpha}
\newcommand{\be}{\beta}
\newcommand{\bi}{\itin_{\text{max}}}
\newcommand{\ti}{\itin_{\text{true}}}
\newcommand{\ol}{\overline}
\newcommand{\sh}{\ol{\text{sh}}}
\newcommand{\lo}{\ol{\text{lo}}}
\newcommand{\sm}{\setminus}
\begin{document}

\title{Cubic Critical Portraits and Polynomials with Wandering Gaps}

\author{Alexander~Blokh} \thanks{The authors were partially supported
  by NSF grant DMS-0901038 (A.B.) and by NSF grant DMS-0906316 (L.O.)}

\author{Clinton~Curry}

\author{Lex~Oversteegen}

\address[Alexander~Blokh, Lex~Oversteegen and Clinton~Curry]
{Department of Mathematics\\ University of Alabama at Birmingham\\
  Birmingham, AL 35294-1170\\ Tel.: (205)934-2154\\ Fax:
  (205)934-9025}

\email[Alexander~Blokh]{ablokh@math.uab.edu}
\email[Lex~Oversteegen]{overstee@math.uab.edu}
\email[Clinton~Curry]{clintonc@math.sunysb.edu}

\date{March 9, 2010; revised versions December 12, 2010, July 20, 2011 and December 14, 2011}

\keywords{Complex dynamics, locally connected, Julia set, lamination}

\begin{abstract}
  Thurston introduced $\si_d$-invariant laminations (where $\si_d(z)$
  coincides with $z^d:\uc\to \uc$, $d\ge 2$) and defined
  \emph{wandering $k$-gons} as sets $\T\subset \uc$ such that
  $\si_d^n(\T)$ consists of $k\ge 3$ distinct points for all $n\ge 0$
  and the convex hulls of all the sets $\si_d^n(\T)$ in the plane are
  pairwise disjoint. He proved that $\si_2$ has no wandering $k$-gons.

  Call a lamination with wandering $k$-gons a \emph{WT-lamination}.
  In a recent paper it was shown that uncountably many cubic
  WT-laminations, with pairwise non-conjugate induced maps on the
  corresponding quotient spaces $J$, are realizable as cubic
  polynomials on their (locally connected) Julia sets. Here we use a
  new approach to construct cubic WT-laminations with the above
  properties so that any wandering branch point of $J$ has a dense
  orbit in each subarc of $J$ (we call such orbits \emph{condense}),
  and show that critical portraits corresponding to such laminations
  are dense in the space $\A_3$ of all cubic critical portraits.
\end{abstract}

\maketitle

\section{Introduction}\label{intro}

\subsection{Preliminaries}
Let $\complex$ be the complex plane and $\rsphere = \complex \cup
\{\infty\}$ be the complex sphere.  Theorem~\ref{t:nowanpol} is a
special case of a theorem of Thurston \cite[Theorem II.5.2]{thu85}.

\begin{thm}[No wandering vertices for quadratics]\label{t:nowanpol}
  Let $P(z) = z^2 + c$ be a polynomial which has connected Julia set
  $J_P$.  Then, if $z_0 \in J_P$ is a point such that $J_P \setminus
  \{z_0\}$ has at least three components, then $z_0$ is either
  preperiodic or eventually maps to the critical point $0$.
\end{thm}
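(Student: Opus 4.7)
The plan is to transfer the problem to the combinatorics of the doubling map via the $\si_2$-invariant quadratic lamination $\La_P$ associated to $P$. Since $J_P$ is connected, $\La_P$ exists and encodes the identifications made by external rays landing at common points of $J_P$; a point $z_0 \in J_P$ whose removal leaves at least three components of $J_P$ corresponds to a $\si_2$-equivalence class $\T \subset \uc$ with $|\T| \geq 3$. Under this correspondence, the two alternatives in the conclusion---$z_0$ preperiodic, or some iterate of $z_0$ equal to $0$---translate into: $\T$ is either preperiodic under $\si_2$ or some iterate of $\T$ hits the critical leaf. So it suffices to prove Thurston's purely combinatorial statement that $\si_2$ admits no wandering $k$-gon with $k \geq 3$.

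Assuming toward a contradiction that there is a wandering $k$-gon, I would pick three of its vertices and reduce to a wandering triangle $\T$ with iterates $\T_n := \si_2^n(\T)$, each a three-point subset of $\uc$ whose convex hulls in $\overline{\disk}$ are pairwise disjoint. The main tool is a length argument. Each $\T_n$ partitions $\uc$ into three arcs whose lengths sum to $1$, and at most one of them has length exceeding $1/2$. An arc of length $\ell \leq 1/2$ maps under $\si_2$ to an arc of length $2\ell$, while the one possibly long arc folds across the critical leaf and may shorten. Thus, on a ``non-folding'' step, the minimum arc length of $\T_n$ doubles. On the other hand, disjointness of the convex hulls of the $\T_n$ in $\overline{\disk}$ implies, by a pigeonhole on $\uc$, that only finitely many of the $\T_n$ can have all three arc lengths bounded below by a fixed $\eps > 0$. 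Balancing the doubling against this pigeonhole yields a contradiction: the minimum arc length of $\T_n$ is forced both to tend to $0$ (so that infinitely many pairwise disjoint triangles can coexist in $\overline{\disk}$) and to grow (by the doubling on non-folding iterates).

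The main obstacle will lie in controlling the folding iterates. Each time the long arc of $\T_n$ straddles the critical leaf, its image can shrink sharply, breaking the simple doubling heuristic. The crux of Thurston's argument is thus a careful analysis of how frequently this folding can occur and of the cyclic arrangement of the vertices of $\T_n$ relative to the critical leaf; disjointness of the convex hulls will constrain the folding events to be sparse enough that doubling wins on average. A secondary point is the bridge between $J_P$ and $\La_P$: when $J_P$ is not locally connected, the correspondence between $k$-valent cut points of $J_P$ and $k$-element equivalence classes of $\La_P$ requires the theory of external rays and prime ends, which I would invoke rather than redevelop.
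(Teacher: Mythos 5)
The paper does not prove this statement; it cites it as a special case of Thurston's No Wandering Triangles Theorem (\cite[Theorem II.5.2]{thu85}). Your proposal is an attempt to outline Thurston's original argument, which is a reasonable thing to do, and the overall architecture you describe (pass to the quadratic lamination, reduce to a wandering triangle, combine an arc-length expansion estimate with a pigeonhole on pairwise disjoint convex hulls) is indeed the right one.

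However, there is a genuine conceptual error at the heart of your length argument. You organize the proof around ``non-folding steps'' on which the minimum arc length doubles, with folding events hoped to be sparse. But for a wandering (hence non-critical) triangle $\T$ in a $\si_2$-invariant lamination, the map $\si_2|_{\T}$ must be an orientation-preserving bijection onto $\si_2(\T)$, which forces \emph{exactly} one of the three complementary arcs of $\T$ to have length strictly greater than $1/2$ at \emph{every} step: the arc containing both $0$ and $1/2$. (If all three arcs had length $\leq 1/2$, the gap would be critical.) Consequently, writing the arc lengths as $(a,b,c)$ with $c > 1/2$, each step replaces them with $(2a, 2b, 2c-1)$, so folding occurs on every iterate and there are no ``non-folding steps'' whose doubling you can bank. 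Moreover, the minimum arc length does not simply double: when $2a > 2c - 1$ the new minimum is $2c - 1 < 2a$, which can be much smaller. The real work in Thurston's proof is precisely the careful bookkeeping of how the triple $(a_n, b_n, c_n)$ (or, equivalently, the major side of $\T_n$) evolves under this iteration in combination with unlinkedness, and that is the part you have left open; as stated, the ``doubling wins on average'' heuristic does not go through. The pigeonhole claim itself --- that only finitely many pairwise disjoint triangles can have all three arcs at least $\eps$ --- is correct and is a legitimate ingredient, and your remark about needing prime-end theory to bridge $J_P$ and $\La_P$ in the non--locally-connected case is also a valid caveat.
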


A point $z_0 \in J_P$ is called a \emph{vertex} if $J_P \setminus
\{z_0\}$ has at least three components, and a vertex is called
\emph{wandering} if it is not periodic and never maps to a critical
point.  It is shown in \cite[Theorem 1.1]{bo04} that there exist
uncountably many cubic polynomials, each of which has a locally
connected Julia set and a wandering vertex.  Here we improve on these
examples by constructing uncountably many cubic polynomials with locally connected
Julia sets whose wandering vertices have \emph{condense orbits}, where
a set $A \subset X$ is called \emph{condense} if $A$ intersects every
non-degenerate subcontinuum of $X$.  This is much stronger than the
density of the orbit of $z_0$ in $J$ (e.g., most subcontinua of
dendritic Julia sets are nowhere dense); see \cite{bco11} for some
consequences of having a condense orbit. To state our results
precisely, we first briefly describe Thurston's theory of invariant
laminations.

Laminations were introduced by Thurston \cite[Definition
II.4.2]{thu85} as a tool for studying individual complex polynomials
$P:\rsphere \to \rsphere$ and their parameter space. Let $P$ be a
degree $d$ polynomial with a connected Julia set $J_P$. Its filled-in
Julia set $K_P$ is compact, connected, and full, so its complement
$\rsphere \setminus K_P$ is conformally isomorphic to the open unit
disk $\disk$. By \cite[Theorem 9.5]{milnor} one can choose a conformal
isomorphism $\Psi:\disk \to \rsphere \setminus K_P$ so that $\Psi$
satisfies $\Psi(z^d) = (P|_{\rsphere \setminus K_P} \circ \Psi)(z)$
for all $z \in \disk$. For a locally connected Julia set $J_P$, the
map $\Psi$ extends to a continuous map $\ol{\Psi}: \ol{\disk}\to
\ol{\rsphere\setminus K_P}$ \cite[Theorem 17.14]{milnor},
semiconjugating $z \mapsto z^d$ on $\ol{\disk}$ to $P|_{\ol{\rsphere
    \setminus K_P}}$. Let $\psi:\uc \to J_P$ denote the restriction
$\ol \Psi |_{\uc}$, and let $\si_d:\uc \to \uc$ denote the map $z
\mapsto z^d$. Define an equivalence relation $\sim_P$ on $\uc$ so that
$x \sim_P y$ if and only if $\psi(x)=\psi(y)$, and call it the
\emph{$\si_d$-invariant lamination generated by $P$}. The quotient
space $\uc/{\sim_P}=J_{\sim_P}$ is homeomorphic to $J_P$ and the
\emph{induced map} $f_{\sim_P}:J_{\sim_P}\to J_{\sim_P}$ defined by
$f_{\sim_P} = \psi \circ \si_d \circ \psi^{-1}$ is conjugate to
$P|_{J_P}$. One can introduce abstract laminations (frequently denoted
by $\sim$) as equivalence relations on $\uc$ similar to laminations
generated by polynomials as above (see Section 2). We call $J_\sim =
\uc/\sim$ a \emph{topological Julia set}, and denote the map
\emph{induced} by $\si_d$ on $J_\sim$ by $f_\sim$.

Laminations are also used in studying the space $\Pro_d\cong
\complex^{d-1}$ of degree $d\ge 2$ monic centered polynomials
$z\mapsto z^d+a_{d-2}z^{d-2}+\dots+a_0$. The \emph{connectedness locus
  of $\Pro_d$} is the set $\C_d$ of parameters in $\Pro_d$ for which
the Julia set is connected (which is a continuum by \cite[Corollary
3.7]{bh88} and \cite{lav89}). The set $\C_2$ is called the
\emph{Mandelbrot set} and is denoted by $\mathcal{M}$. Thurston
\cite[Definition II.6.9]{thu85} defined a ``meta-lamination'',
referred to as $\qml$, and showed that the closure of the space of
all $\si_2$-invariant (\emph{quadratic}) laminations can be thought
of as the quotient space $\uc/\qml=\mathcal{M}_c$, a locally
connected continuum that serves as a combinatorial model of the
boundary of $\mathcal{M}$. The exact relationship between
$\mathcal{M}_c$ and $\mathcal{M}$ is unknown. Thurston conjectured
that the boundary of $\mathcal{M}$ is homeomorphic to
$\mathcal{M}_c$, which, if true, would imply that $\mathcal{M}$ is
locally connected. A crucial role in his study is played by the next
theorem.

\begin{nwtthm}[{\cite[Theorem II.5.2]{thu85}}]
  Let $\sim$ be a $\si_2$-invariant lamination.  If $\mg$ is a
  $\sim$-class of cardinality at least three, then $\mg$ is either
  eventually critical or eventually periodic.
\end{nwtthm}

Thus, branch points in degree 2 topological Julia sets either are
precritical or preperiodic, which can be regarded as a generalization
of the corresponding property of continuous maps of finite graphs.
Also, it follows from the No Wandering Triangles Theorem that branch
points of $\mathcal{M}_c$ correspond to laminations with
preperiodic critical classes. This makes the problem of extending
Thurston's result to higher degrees, posed by Thurston in
\cite{thu85}, important.  Indeed, J. Kiwi~\cite[Theorem 1.1]{kiwigap}
answers this call by showing that a wandering non-precritical gap in a
$\si_d$-invariant lamination is at most a $d$-gon. Thus all infinite
$\sim$-classes and Jordan curves in $J_\sim$ are preperiodic.  Later
on \cite[Theorem B]{blolev99} it was shown that if $\Gamma$ is a
non-empty collection of wandering non-precritical $d_j$-gons
($j=1,2,\dots$) of a $\si_d$-invariant lamination with distinct grand
orbits, then $\sum_{\Gamma} (d_j - 2)\le d-2$. Thus, there are bounds
on the number of wandering gaps with distinct grand orbits.

However, even for $\si_3$-invariant (henceforth \emph{cubic}) laminations
wandering triangles exist: by \cite{bo04}, Theorem 1.1, there are
uncountably many pairwise non-conjugate cubic polynomials $P$ which
have dendritic Julia sets with a wandering branch point. Hence for
each such polynomial $P$ the corresponding lamination $\sim_P$ has a
wandering triangle. Since the construction in \cite{bo04} is quite
specific, the corresponding ``wandering'' dynamics might be rare.

This paper gives a more general and flexible construction than that in
\cite{bo04}, extending the above. Let $\A_3$ be the space of all cubic
critical portraits. We construct cubic laminations such that the
forward orbit of a wandering triangle is \emph{dense in the entire
  lamination} (this is much stronger than in \cite{bo04}), and prove
that \emph{their critical portraits form a locally uncountable and
  dense subset of $\A_3$}. Thus, critical portraits with wandering
vertices in their Julia sets are not rare.  Even more, the topological
polynomial of each of these laminations is conjugate to a polynomial
restricted to its Julia set by \cite{bco11}.

In conclusion we want to thank the referees for careful reading of the
manuscript and useful remarks which helped us improve the paper.

\subsection{Statement of the results}\label{results}

We parameterize the circle as $\uc = \reals / \ints$ with total
arclength $1$. In this parameterization the map $\si_d$ corresponds to
the map $t\mapsto d\cdot t \mod 1$. The \emph{positive} direction on
$\uc$ is \emph{counterclockwise}, and by an arc $(p,q)$ in $\uc$ we
mean the positively oriented arc from $p$ to $q$. A \emph{monotone}
map $g:(p, q)\to \uc$ is a map such that for each $y\in \uc$ the set
$g^{-1}(y)$ is connected. A monotone map is called \emph{strictly
  monotone} if it is one-to-one. Given a set $A \subset \uc$, we
denote the cardinality of $A$ by $|A|$ and the convex hull of $A$ in
the closed unit disk by $\ch(A)$ (for our purposes it does not matter
whether we use the Euclidian or the hyperbolic metric). In what
follows, given a map $g$, a \emph{($g$-)image} of a set $A$ is the
image of $A$ under an iterate of $g$ while the \emph{first
  ($g$-)image} of a set $A$ is the set $g(A)$. Similar language will
be used for preimages.

Given an equivalence relation $\sim$ on $\uc$, the equivalence classes
are called \emph{($\sim$-)classes} and are denoted by boldface
letters. A $\sim$-class consisting of two points is called a
\emph{leaf}; a class consisting of at least three points is called a
\emph{gap} (this is more restrictive than Thurston's definition in
\cite{thu85}).

Fix an integer $d>1$. Then an equivalence relation $\sim$ is said to
be a \emph{$\si_d$-invariant lamination} if:

\begin{itemize}
\item[(1)] $\sim$ is \emph{closed}: the graph of $\sim$ is a closed
  set in $\uc \times \uc$;
\item[(2)] $\sim$-classes are \emph{pairwise unlinked}: if $\mg_1$
  and $\mg_2$ are distinct $\sim$-classes, their convex hulls
  $\ch(\mg_1), \ch(\mg_2)$ in the closed unit disk $\ol{\disk}$ are disjoint;
\item[(3)] $\sim$-classes are \emph{totally disconnected} (and hence
  $\sim$ has uncountably many classes) provided $\uc$ is not one
  class;
\item[(4)] $\sim$ is \emph{forward invariant}: for a class $\mg$, the
  set $\si_d(\mg)$ is also a class;
\item[(5)] $\sim$ is \emph{backward invariant}: for a class $\mg$, its
  first preimage $\si_d^{-1}(\mg)=\{x\in \uc: \si_d(x)\in \mg\}$ is a union
  of classes; and
\item[(6)] $\sim$ is \emph{gap invariant}: for any gap $\mg$, the map
  $\si_d|_{\mg}: \mg\to \si_d(\mg)$ is a \emph{covering map with
    positive orientation}, i.e., for every connected component $(s,
  t)$ of $\uc\setminus \mg$ the arc $(\si_d(s), \si_d(t))$ is a
  connected component of $\uc\setminus \si_d(\mg)$.
\end{itemize}
Notice that (3) and (5) are implied by (4).

Call a class $\mg$ \emph{critical} if $\si_d|_{\mg}: \mg\to
\si_d(\mg)$ is not one-to-one, and \emph{precritical} if
$\si_d^j(\mg)$ is critical for some $j\ge 0$. Call $\mg$
\emph{preperiodic} if $\si^i_d(\mg)=\si^j_d(\mg)$ for some $0\le
i<j$. A gap $\mg$ is \emph{wandering} if $\mg$ is neither preperiodic
nor precritical, and a lamination which has a wandering gap is called
a \emph{WT-lamination}. For a lamination $\sim$, let $J_\sim = \uc /
\sim$, and let $\pi_\sim: \uc \to J_\sim$ be the corresponding
quotient map.  Then the map $f_\sim:J_\sim \to J_\sim$ defined by
$f_\sim = \pi_\sim \circ \si_d \circ \pi_\sim^{-1}$ is the map
\emph{induced} on $J_\sim$ by $\si_d$ (the map $f_\sim$ is
well-defined in view of (4)). We call $f_\sim$ a \emph{topological
  polynomial}, and $J_\sim$ a \emph{topological Julia set}.

Though we define laminations as specific equivalence relations on
$\uc$, one can also work with a corresponding collection of chords,
called a \emph{geometric ($\si_d$-invariant) lamination}. Given a
$\si_d$-invariant lamination $\sim$, its \emph{geometric lamination}
$(\La_\sim, \si_3)$ is defined as the union of all chords in the
boundaries of convex hulls of all $\sim$-classes; the map $\si_3$ is
then extended over $\La_\sim$ by linearly mapping each chord in
$\La_\sim$ forward. Clearly, $\La_\sim$ is a closed family of chords
of $\ol{\disk}$ (in the above situation we also include degenerate
$\sim$-classes in the list of chords). Geometric laminations have
properties similar to the properties of the laminations introduced
by Thurston \cite[Definition II.4.2]{thu85}. One of the main ideas
of this paper is to study \emph{finite truncations of geometric
laminations $(\La_\sim, \si_3)$
  defined up to an order preserving conjugacy} and considering such
\emph{finite laminations} as purely combinatorial objects.

\subsubsection{Critical portraits}

Fix $d\ge 2$. A key tool in studying $\C_d$ is \emph{critical
  portraits}, introduced in \cite{fish89}, and widely used afterward
\cite{bfh92, golmil93,ki4,po93,po09}. We now recall some standard
material.  Here we follow \cite[Section 3]{ki4} closely.  Call a chord
of the circle, with endpoints $a, b\in \uc$, \emph{critical} if
$\si_d(a)=\si_d(b)$.

\begin{defn}\label{cp}
  Fix $d\ge 2$. A \textbf{($\si_d$-)critical portrait} is a collection
  $\Ta=\{\Ta_1, \dots, \Ta_n\}$ of finite subsets of $\uc$ such that
  the following hold.
  \begin{enumerate}
  \item The boundary of the convex hull $\ch(\Ta_i)$ of each set
    $\Ta_i$ consists of critical chords;
  \item the sets $\Ta_1, \dots, \Ta_n$ are \textbf{pairwise unlinked}
    (that is, their convex hulls are pairwise disjoint); and
  \item $\sum (|\Ta_i|-1)=d-1$.
  \end{enumerate}
\end{defn}

To comment on Definition~\ref{cp} we need the following terminology:
the sets $\Ta_1, \dots, \Ta_n$ are called the \emph{initial sets of
  $\Ta$} (or \emph{$\Ta$-initial sets}). The convex hulls of the
$\Ta$-initial sets divide the rest of the unit disk $\disk$ into
components. Consider one such component, say $U$. Then $\partial U$
consists of circular arcs and critical chords $\ell_1, \dots,
\ell_k$. If we extend $\si_d$ linearly on the chords in the
boundary of $U$, then, by Definition~\ref{cp}, $\partial U$ maps onto
$\uc$ one-to-one except for the collapsing critical chords $\ell_1,
\dots, \ell_k$.  In fact, Definition~\ref{cp} is designed to achieve
this dynamical property related to situations of the following
kind. Suppose that $P$ is a polynomial of degree $d$ with dendritic
(i.e., locally connected and containing no simple closed curve) Julia
set $J_P$ such that all its critical values are endpoints of $J_P$.
Then the arguments of rays landing at the critical points of $P$ form
the initial sets of a certain critical portrait associated with this
polynomial.

Let $\Ta$ be a critical portrait. Denote by $A(\Ta)$ the union of
all angles from the initial sets of $\Ta$. As was remarked above,
the convex hulls of the $\Ta$-initial sets divide the rest of the
unit disk $\disk$ into components. According to Definition~\ref{ue},
points of $\uc\setminus A(\Ta)$ belonging to the boundary of one
such component will be declared equivalent. However, since we need
this notion in more general situations, we will not assume in
Definition~\ref{ue} that $\Ta$ is a critical portrait.

\begin{defn}[{\cite{bfh92}, \cite{fish89}, \cite{golmil93}, \cite[Definition
3.4]{ki4}}]\label{ue}
Let $\Ta=\{\Ta_1, \dots, \Ta_n\}$ be a finite collection of pairwise
unlinked finite subsets of $\uc$ (not necessarily a critical
portrait). Angles $\alpha, \beta\in \uc\sm A(\Ta)$ are
\textbf{$\Ta$-unlinked equivalent} if $\{\alpha, \beta\},$ $\Ta_1,$
$\dots,$ $\Ta_n$ are pairwise unlinked (i.e., if the chord
$\ol{\al\be}$ is disjoint from the $\bigcup_{i=1}^k \ch(\Ta_k)$). The
classes of equivalence $L_1(\Ta),$ $\dots,$ $L_d(\Ta)$ are called
\textbf{$\Ta$-unlinked classes}. Each $\Ta$-unlinked class $L$ is the
intersection of $\uc\sm A(\Ta)$ with the boundary of a component of
$\disk\sm \bigcup \ch(\Ta_i)$. If $\Ta$ is a degree $d$ critical
portrait, then each $\Ta$-unlinked class of $\Ta$ is the union of
finitely many \emph{open (in $\uc$)} arcs of total length $1/d$. Thus,
in this case there are precisely $d$ $\Ta$-unlinked classes.
\end{defn}

We introduce the following topology on the set of critical
portraits.

\begin{defn}[compact-unlinked topology {\cite[Definition
    3.5]{ki4}}]\label{cu}
  Define the space $\A_d$ as the set of all degree $d$ critical
  portraits.  We endow it with the \textbf{compact-unlinked topology}:
  the coarsest topology on $\A_d$ such that, for any compact set $X
  \subset \uc$, the set of critical portraits whose critical leaves
  are unlinked with $X$ is open.
\end{defn}

For example, $\A_2$ is the quotient of the circle with antipodal
points identified, so it is homeomorphic to $\uc$. As another example,
take a cubic critical portrait which consists of a triangle $T$ with
vertices $a, b, c$ and three critical edges, and choose compact sets
$X_1$, $X_2$, and $X_3$ in the three components of $\uc \setminus \{a,
b, c\}$.  Then every neighborhood of $T$ includes critical triangles
close to $T$ and pairs of disjoint critical leaves each of which is
close to an edge of $T$.

For a critical portrait $\Ta$, a lamination $\sim$ is called
\emph{$\Ta$-compatible} if all $\Ta$-initial sets are subsets of
$\sim$-classes. The trivial lamination which identifies all points of
the circle is compatible with any critical portrait.  If there is a
$\Ta$-compatible WT-lamination, $\Ta$ is called a \emph{critical
  WT-portrait}.

An important tool for describing the dynamics of a lamination is the
itinerary.  For simplicity and because it suits our purpose, the
following is defined for critical portraits consisting of $d-1$
disjoint critical leaves.  For a critical portrait $\Ta$ and a point
$t \in \uc$, we define two types of itineraries: \emph{one-sided
  itineraries}, denoted $\itin^+(t)$ and $\itin^-(t)$ which are
sequences of $\Ta$-unlinked classes (see \cite[Definition 3.13]{ki4}),
and the itinerary $\itin(t)$ corresponding to the Markov partition
into $\Ta$-unlinked classes and elements of $\Ta$.  We define
$\itin^+(t) = i_0^+i_1^+i_3^+\ldots$ where $i_k^+$ is the
$\Ta$-unlinked class which contains the interval $(\si_d^k(t),
\si_d^k(t)+\epsilon)$ for some small $\epsilon>0$; $\itin^-(t) =
i_0^- i_1^- \ldots $ is similarly defined.  We define $\itin(t) =
i_0i_1i_2, \ldots$ such that $i_k$ is the $\Ta$-unlinked class
or critical leaf containing $\si_d^k(t)$.  Thus, if $\si_d^k(t)$ is
not an endpoint of a critical leaf, then $i_k$, $i^+_k$, and $i^-_k$
are all equal, and if $\si_d^k(t)$ is an endpoint of a critical leaf,
then $i_k$, $i^+_k$, and $i^-_k$ are all different.

An important class of critical portraits are \emph{critical portraits
  with aperiodic kneading}.
A critical portrait $\Ta$ with $d-1$ critical leaves has aperiodic
kneading if for each angle $\ta\in A(\Ta)$ the itineraries $\itin^+(\ta)$
and $\itin^-(\ta)$ are not periodic \cite[Definition 4.6]{ki4}. The family
of all critical portraits with $d-1$ critical leaves and aperiodic
kneading sequence is denoted by $\AP_d$.

\begin{defn}[Definition 4.5 \cite{ki4}]\label{lamtheta}
  Let $\Ta$ be a critical portrait with aperiodic kneading.  The
  lamination $\sim_\Ta$ is defined as the smallest equivalence
  relation such that if $\itin^+(x)=\itin^-(y)$ then $x$ and $y$ are
  $\sim_\Ta$-equivalent; it is said to be the lamination
  \emph{generated} by $\Ta$.
\end{defn}

Now we quote a fundamental result of Kiwi \cite{ki4} (see
also \cite{kiwi97}). To state it we need the following definitions.
A map $\ph: X\to Y$ from a continuum $X$ to a continuum $Y$ is said
to be \emph{monotone} if $\ph$-preimages of points are continua. A
\emph{dendrite} is a locally connected non-degenerate continuum
which contains no subsets homeomorphic to $\uc$.

\begin{thm}[\cite{kiwi97, ki4}]\label{kiwistruct}
  Let $\Ta\in \AP_d$.  Then the lamination $\sim_\Ta$ is the unique
  $\Ta$-compatible $\si_d$-invariant lamination, $J_{\sim_\Ta}$ is a
  dendrite, all $\sim_\Ta$-classes are finite, and there exists a
  polynomial $P$ whose Julia set $J_P$ is a non-separating continuum
  in the plane such that $P|_{J_P}$ is semiconjugate to
  $f_{\sim_\Ta}|_{J_{\sim_\Ta}}$ by a monotone map $\psi:J_P \to
  J_{\sim_\Ta}$. For each $P$-preperiodic point $x\in J_P$ the set
  $\psi^{-1}(\psi(x))=\{x\}$. Furthermore, $J_P$ is locally connected
  at preperiodic points.
\end{thm}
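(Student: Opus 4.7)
The plan is to exploit the aperiodic kneading hypothesis as a form of symbolic expansivity, and then feed the resulting combinatorial structure into a parameter-space realization argument. First I would establish a key expansivity lemma: if $\Ta\in\AP_d$ and $x,y\in\uc$ satisfy $\itin^+(x)=\itin^+(y)$, then the chord $\overline{xy}$ crosses no boundary of a $\Ta$-unlinked class, and moreover the orbit of $\{x,y\}$ cannot repeatedly straddle a critical leaf, because such recurrence would force one of the endpoints in $A(\Ta)$ to have a periodic one-sided itinerary. This expansivity is what forces each matching-itinerary equivalence class to be \emph{finite}, and is the main technical obstacle: an infinite class would accumulate at a point whose one-sided itineraries coincide with those of an endpoint of $A(\Ta)$, and dynamical closedness would then produce the forbidden periodic coding.

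Next I would verify that $\sim_\Ta$ defined by the relation $\itin^+(x)=\itin^-(y)\Rightarrow x\sim_\Ta y$ is actually a $\si_d$-invariant lamination. Closedness is a continuity argument for itineraries on the complement of $A(\Ta)$ combined with finiteness of classes; pairwise unlinkedness and forward/backward/gap invariance are essentially immediate from the way a critical portrait collapses the boundary of each $\Ta$-unlinked component onto $\uc$. For uniqueness among $\Ta$-compatible laminations, I would argue that any such lamination $\approx$ must already identify every pair $x,y$ with matching one-sided itineraries --- because each $\Ta$-initial set is an $\approx$-class, an orbit-chasing argument propagates this identification --- and conversely cannot identify strictly more, since that would produce an accumulation of identifications violating the expansivity lemma.

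To produce the polynomial $P$, I would invoke the parameter-space machinery of \cite{kiwi97, ki4}: parameter external rays at the angles in $A(\Ta)$ accumulate at a nonempty compact subset of the connectedness locus $\C_d$, and a compactness argument selects a $P\in\C_d$ whose dynamical rays with arguments in each $\Ta_i$ co-land at a single critical value or its preimage critical point. Aperiodic kneading rules out attracting and parabolic cycles, so $J_P$ is a full non-separating continuum. Define $\psi\colon J_P\to J_{\sim_\Ta}$ by sending $z\in J_P$ to the $\sim_\Ta$-class of any argument whose ray impression meets $z$; the fibers of $\psi$ are the impressions of $\sim_\Ta$-classes, which are subcontinua of $J_P$, so $\psi$ is monotone, and the functional equation for external rays gives $\psi\circ P = f_{\sim_\Ta}\circ\psi$. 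That $J_{\sim_\Ta}$ is a dendrite then follows because it is a locally connected continuum (quotient of $\uc$ by a closed unlinked equivalence with finite classes) containing no Jordan curve: any such curve would correspond to a periodic laminational loop whose coding would again be periodic in contradiction with aperiodic kneading.

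Finally, for a $P$-preperiodic point $x\in J_P$, only finitely many external rays can land at $x$, and their arguments are rational and preperiodic under $\si_d$; combined with aperiodic kneading, which forbids any angle in $A(\Ta)$ from being rational, this forces the $\sim_\Ta$-class corresponding to $x$ to consist of \emph{exactly} the arguments landing at $x$, hence $\psi^{-1}(\psi(x))=\{x\}$. Local connectivity of $J_P$ at $x$ is then obtained by the standard argument at a repelling periodic point in the orbit of $x$: produce a puzzle-piece neighborhood basis via the local repelling coordinate of an appropriate iterate, and pull back using the finite-ray landing. The \emph{main obstacle} in this program is the realization step itself --- showing that the chosen combinatorial co-landing pattern is actually realized by a complex polynomial --- since the laminational data alone do not force holomorphic realizability, and it is precisely here that the full parameter-space technology of \cite{ki4} is needed.
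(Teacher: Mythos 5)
The paper does not prove this theorem; the bracketed citation in the statement marks it as Kiwi's, and the Remark that immediately follows it in the paper simply catalogs which results of \cite{ki4} and \cite{kiwi97} supply each clause. Your proposal instead attempts to reconstruct Kiwi's internal arguments. The broad skeleton you describe --- aperiodic kneading as a form of symbolic expansivity forcing finite classes, a parameter-ray realization of $\Ta$, a monotone semiconjugacy obtained from ray impressions, and a periodicity contradiction ruling out Jordan curves --- is consistent with Kiwi's strategy at the level of a sketch, though (as you acknowledge) the realization step is still delegated to \cite{ki4}, so the proposal is a partial reconstruction rather than a self-contained proof.

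There is, however, a concrete error in your argument for the clause $\psi^{-1}(\psi(x))=\{x\}$ at $P$-preperiodic points. You assert that aperiodic kneading ``forbids any angle in $A(\Ta)$ from being rational.'' This is false: the definition only excludes \emph{periodic} one-sided itineraries, which rules out $\si_d$-periodic angles but not strictly preperiodic ones, and strictly preperiodic angles are rational. Indeed this very paper applies Theorem~\ref{kiwistruct} in the proof of Lemma~\ref{lem:init_cond1} to a critical portrait whose endpoints are strictly preperiodic, hence rational. So the inference you base on this premise --- that the $\sim_\Ta$-class associated to $x$ consists of exactly the arguments of rays landing at $x$ because $A(\Ta)$ misses the rationals --- does not go through, and this clause needs a genuinely different argument (it is Theorem~2 of \cite{kiwi97}, precisely as the paper's Remark indicates). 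A secondary imprecision: your expansivity lemma is phrased for pairs with $\itin^+(x)=\itin^+(y)$, whereas $\sim_\Ta$ and the finiteness-of-classes claim both concern the mixed condition $\itin^+(x)=\itin^-(y)$, in which the orbit of the chord $\overline{xy}$ can repeatedly touch endpoints of critical leaves; the lemma as stated does not directly yield the finiteness you need.
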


\begin{rem}
  This theorem is an amalgamation of several results of Kiwi;  we
  will describe those results here.  Suppose that $\Ta$ satisfies the
  hypothesis.  Then, by \cite[Corollary 3.26]{ki4}, there is a
  polynomial $P$ with connected Julia set whose critical portrait is
  $\Ta$.  By \cite[Theorem 1]{ki4}, all cycles of $P$ are repelling,
  and the real lamination of $P$ is the unique $\Ta$-compatible
  lamination.  By \cite[Theorem 5.12]{kiwi97}, $P|_{J_P}$ is
  monotonically semiconjugate to $f_|{\sim_\Ta}$; since $J_P$ is
  non-separating, it follows that $J_{\sim_\Ta}$ is a dendrite.  That
  the monotone projection $\psi:J_P \to J_{\sim_\Ta}$ is one-to-one
  at   $P$-preperiodic points follows from \cite[Theorem 2]{kiwi97},
  and that $J_P$ is locally connected at $P$-preperiodic points
  follows from \cite[Corollary 1.2]{kiwi97}.
\end{rem}

\subsubsection{Condensity and main results}\label{ss:condens}

For a topological space $X$, a set $A\subset X$ is
\emph{con\-tinuum-dense} (briefly, \emph{condense}) in $X$ if $A\cap B\ne
\emptyset$ for each non-degenerate continuum $B\subset X$. If $X$ is a
dendrite, $A$ is condense in $X$ if and only if $A$ intersects every
open arc. A condense set $A$ is dense in $X$, but condensity is
stronger than density; for example, the set of endpoints of a non-interval
dendritic Julia set $J$ is residual, dense and disjoint from all
non-degenerate open arcs in $J$. If $x$ is a point with condense
orbit in the Julia set $J_P$ of a polynomial $P$, then $J_P$ is a
dendrite or a Jordan curve \cite[Theorem 1.4]{bco11}.

Let $\WT_3$ be the family of all cubic critical WT-portraits.

\begin{mathe}
  For each open $U\subset \A_3$ there is an uncountable set $\B
  \subset U\cap \AP_d \cap \WT_3$ such that the following facts hold:

  \begin{enumerate}

  \item there exists a wandering branch point in $J_{\sim_\Ta}$ whose
    orbit is condense in $J_{\sim_\Ta}$;

  \item the maps $\{f_{\sim_\Ta}|_{J_{\sim_\Ta}}\,\mid\, \Ta\in \B\}$ are
    pairwise non-conjugate;

  \item for each $\Ta\in \B$ there exists a cubic polynomial $P_\Ta$ such
    that $P_\Ta|_{J_{P_\Ta}}$ is conjugate to
    $f_{\sim_\Ta}|_{J_{\sim_\Ta}}$.

  \end{enumerate}

\end{mathe}

\section{Combinatorial Construction}\label{combconstr}

Let us recall that by a \emph{cubic WT-lamination} we mean a
$\si_3$-invariant lamination which admits a wandering (i.e.,
non-precritical and non-preperiodic) gap.  By \cite[Theorem
B]{blolev99}, a wandering gap in such a lamination is a triangle. We
construct a cubic WT-lamination by means of a sequence $((\La_i,
g_i))_{i=1}^\infty$ of \emph{finite cubic critical laminations}, where
$(\La_{i+1}, g_{i+1})$ \emph{continues} $(\La_i, g_i)$ (the
definitions are given below). There is a limit lamination
$\La=\bigcup^\infty_{i=1} \La_i$ with a map $g$ defined on $\bigcup
\La=\bigcup^\infty_{i=1} (\bigcup \La_i)$ so that $g$ and $g_i$ agree
on $\La_i$ for each $i$. By Theorem 3.3 of \cite{bo04}, $\bigcup
\La_i$ can be embedded in $\uc$ by means of an order preserving map so
that the map induced on its image can be extended to $\si_3$.  This
will give rise to the desired invariant cubic WT-lamination.

Unlike \cite{bo04} (where the construction was rather specific), we
are concerned not only with the existence of wandering triangles, but
also with how complicated and dense their dynamics can be. In addition, we
investigate how common WT-critical portraits are in $\A_3$. To
address these issues, we develop a construction with new features.
Compared to \cite{bo04} they can be summarized as follows: (1)
\emph{all preimages of both critical leaves} are represented in
$((\La_i, g_i))_{i=1}^\infty$, and the wandering triangle approaches
them all (this part is responsible for showing that the corresponding
quotient spaces -- i.e., topological Julia sets -- are dendrites with
wandering non-precritical branch points whose orbits are
\emph{condense}); (2) the initial segments of the constructed
laminations can be chosen arbitrarily close to any given finite
lamination (this part is responsible for the density of WT-critical
portraits in $\A_3$).

\subsection{Finite laminations}\label{finlam}

In this section we study \emph{finite cubic laminations}, in
particular \emph{finite cubic WT-laminations} modeling a
$\si_3$-invariant geometric lamination with a wandering triangle at a
finite step of its orbit. For instance, if $\sim$ is a cubic
invariant lamination with a wandering triangle $\T_1$ and critical
leaves $\mc$ and $\md$, then a collection consisting of $\mc$, $\md$,
finitely many of their images and preimages, and finitely many images
of $\T_1$, form a finite cubic WT-lamination (where $g$ is the
restriction of $\si_3$).

\begin{defn}[Finite laminations]\label{finlamdef}
  We will be interested in three levels of specialization for finite
  laminations.
  \begin{description}
  \item[Finite lamination] A \emph{finite lamination} is a finite
    collection $\La$ of finite, pairwise unlinked subsets of $\uc$.
    The elements of $\La$ are called \emph{$\La$-classes}, and the
    union of points of all $\La$-classes is the \emph{basis} of $\La$.
    By a \emph{sublamination} of a finite lamination $\La$ we mean a
    subcollection of $\La$-classes.
  \item[Dynamical lamination] A \emph{dynamical lamination} is a pair
    $(\La, g)$ where $\La$ is a finite lamination, $g$ is a map
    defined on the basis of a finite sublamination $\bLa$ of $\La$
    which can be extended to a covering map of $\uc$ of degree $3$
    that maps $\bLa$-classes onto $\La$-classes.  Note that the pair
    $(\La, g)$ determines $\bLa$ since by definition
    $\dom(g)$ is the basis of $\bLa$.  Any class on which $g$
    is not defined is called a \emph{last class} of $\La$.
  \item[Critical lamination] A dynamical lamination $(\La, g)$ is
    \emph{critical} if there are two-point classes $\mc$ and $\md$ of
    $\La$, called the \emph{critical leaves} of $\La$, modeling a
    cubic critical portrait: $g(\mc)$ and $g(\md)$ are points, and
    every $\{\mc,\md\}$-unlinked class, intersected with the domain of
    $g$, maps forward in an order-preserving fashion.
  \end{description}
\end{defn}

To establish similarity among finite laminations we need
Definition~\ref{isom}.

\begin{defn}\label{isom}
  A bijection $h:X\to Y$ between sets $X, Y\subset \uc$ is an
  \emph{order isomorphism} if it preserves circular order. If $A, A'
  \subset \uc$, we say maps $f:A\to A$ and $g:A'\to A'$ are \emph{conjugate}
  if they are conjugate in a set-theoretic sense by an order
  isomorphism $h:A\to A'$.
  Note that we do
  not require that an order isomorphism is continuous.
  Finite (dynamical) laminations $\La$ and
  $\La'$ are \emph{order isomorphic} if there is a (conjugating,
  respectively) order isomorphism between $\bigcup \La$ and $\bigcup
  \La'$ that sends $\La$-classes to $\La'$-classes.
\end{defn}

$\La$-classes could be of three distinct types.

\begin{defn}\label{leavgap}
  For a finite lamination $\La$, we refer to $\La$-classes consisting
  of one point as \textbf{buds}, of two points as \textbf{leaves}, of
  3 points as \textbf{triples (triangles)}, and to \emph{all}
  $\La$-classes consisting of more than $2$ points as \textbf{gaps}.
\end{defn}

We identify classes with their convex hulls in $\ol{\disk}$.  The
\emph{convex hulls} of leaves, triples (triangles), and gaps are also
called \emph{leaves, triples (triangles), and gaps.}  The boundary
chords of a non-degenerate class are called \emph{edges}.  In
particularly, every leaf is an edge.  We often talk of $\bLa$ when we
really mean $\bigcup \bLa$ and regard the map $g$ as mapping the convex
hull of a class of $\bLa$ to the convex hull of a class of $\La$.
Chords of $\disk$ are denoted $\ol{a}, \ol{b}$ etc; a chord with endpoints
$x, y$ is denoted by $\ol{xy}$.

Given an interval $(p,q) \subset \uc$, a \emph{strictly monotonically
  increasing} map $g:(p, q)\to \uc$ is a strictly monotone map which
preserves circular orientation. For $A\subset \uc$, a map $f:A\to
f(A)$ is \emph{of degree $3$} if there are $x_0<x_1<x_2< x_3=x_0$ with
$f|_{A\cap [x_i,x_{i+1})}$ strictly monotonically increasing for all
$i=0,1,2$ and there are no two points with the same property.

\begin{defn}\label{finwtdef}
  A critical lamination $(\La, g)$, which contains a designated triple
  $\T_1$ as a class, is called a \textbf{finite cubic WT-lamination}
  if:

  \begin{enumerate}
  \item $(\La, g)$ is of degree 3;
  \item $\La$ has a pair of disjoint critical leaves $\C=\{\mc, \md\}$;
  \item each class $\ma \in \La$ satisfies \emph{exactly one} of the
    following:
    \begin{itemize}
    \item $\ma$ is a bud, in which case $\ma=g^j(\mc)$ for some $j$ or
      $\ma=g^k(\md)$ for some $k$ (but not both),
    \item $\ma$ is a leaf such that $g^i(\ma)$ is a critical leaf for
      some $i$, or
    \item $\ma$ is a triple such that $\ma=g^n(\T_1)$ for some
      $n$
    \end{itemize}
    (so leaves in $\La$ are pullbacks of $\mc$ or $\md$, buds are
    images of $\mc$ or $\md$, and triangles are images of $\T_1$);
  \item all classes from the grand $g$-orbits of $\T_1,$ $\mc,$ and
    $\md$ are unlinked.
  \end{enumerate}

  We will always denote the $n^{th}$ triple in the orbit of $\T_1$ as $\T_n = g^{n-1}(\T_1)$.
  The last classes of $\La$, which are eventual images of $\T_1, \mc$
  and $\md$, are denoted by $\T^l(\La)=\T^l, \mc'(\La)=\mc'$ and
  $\md'(\La)=\md'$; we call $\T^l$ the \textbf{last triple of $\La$}
  (here, the superscript $l$ stands for ``last'').
\end{defn}

Observe that in Definition~\ref{finwtdef} we stipulate that no bud of
a finite cubic lamination is the common image of both critical leaves.
This is because a cubic lamination with a wandering triangle must have
two critical leaves with disjoint orbits \cite[Theorem A]{blolev99}.

\subsection{Continuing to a cubic WT-lamination}

We will build larger and larger finite laminations, only achieving a
true lamination with a wandering triangle in the limit.
We say that a dynamical lamination $(\La', g')$ \emph{continues}
a dynamical lamination $(\La, g)$ if

\begin{enumerate}
\item $\La \subset \La'$ and $\bLa \subset \bLa'$;
\item $g$ and $g'$ have the same degree $3$; and
\item $g'|_{\dom(g)} = g$.
\end{enumerate}

A
natural way to continue a finite lamination uses admissible extensions
of the map $g$ to the circle.
A covering map $F:\uc \to \uc$ of degree three such
that $F|_{\dom(g)}=g$ is said to be an \emph{admissible} extension of
$g$ if $F$ restricted to any complementary interval of $\dom(g)=\bLa$
is strictly monotonically increasing.
Let $F$ be an admissible extension of $g$.
A \emph{forward continuation} is a continuation such that all elements
of $\La' \setminus \La$ are forward images of $\La$-classes. If $(\La,
g)$ is a finite cubic WT-lamination, then $F$
is a \emph{first forward-continuing extension} of $g$ if $(\La\cup
\{F(T^l)\}, F|_{\bLa \cup \{T^l\}})=(\La_F, F|_{\bLa \cup \{T^l\}})$
is a dynamical lamination (where $\T^l$ is the last triple of $\La$);
then $(\La_F, F|_{\bLa \cup \{T^l\}})$ is called the \emph{(first)
  forward continuation of $(\La, g)$ associated to $F$}.

A \emph{backward continuation} is a continuation such that all
elements of $\La' \setminus \La$ eventually map to $\La$-classes.  A
natural way to construct backward continuations is to pull classes
back under an admissible extension of $g$.  Let $F$ be an admissible
extension of $g$, let $U$ be a $\bLa$-unlinked class containing no
$\La$-class, and let $\mx$ be any $\La$-class contained in $F(U)$.
The \emph{unambiguous backward continuation} of $(\La, g)$ (pulling
$\mx$ into $U$ under $F$) is the backward continuation $(\La', g')$
such that $\La' = \La \cup \{\mx'\}$, where $\mx'$ is the first
$F$-preimage of $\mx$ in $U$, and \[g'(t) =
\begin{cases}
 g(t) & \mbox{if }t \in \bigcup \bLa\\
 F(t) & \mbox{if }t \in \mx'
\end{cases}.\]
As we prove in the following Lemma, unambiguous backward continuations
are combinatorially unique, and therefore may be accomplished without
reference to a particular admissible extension.  For this reason, we
can also iteratively construct unambiguous backward continuations
without worry.  We will also call such continuations unambiguous
backward continuations.

\begin{lem}\label{l:unambi}
  Let $(\La, g)$ be a critical lamination, let $U$ be a
  $\bLa$-unlinked class, and let $\mx$ be a $\La$-class.  If $F$ is an
  admissible extension of $g$ such that $F(U)$ contains $\mx$, then
  the unambiguous extension $(\La', g')$ pulling $\mx$ into $U$ under
  $F$ is defined and is a critical lamination of the same degree.
  Further, for any other admissible extensions $F'$ of $g$, the
  backward continuation pulling $\mx$ into $U$ under $F'$ is defined
  and isomorphic to $(\La', g')$.
\end{lem}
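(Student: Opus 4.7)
The plan is to first pin down $\mx'$. Since the critical leaves $\mc,\md$ lie in $\bLa$, every $\bLa$-unlinked class $U$ is contained in some $\{\mc,\md\}$-unlinked class $W$, and any admissible extension $F$ of $g$ restricts to $W$ as a cyclic-order-preserving bijection onto $\uc$ (the standard cubic-critical-portrait property: $W$ has total arc length $1/3$ while $F$ has degree $3$, and order-preservation is built into the definition of a critical lamination). Hence $F|_U$ is injective, so $\mx':=F^{-1}(\mx)\cap U$ is a single class with $|\mx'|=|\mx|$; define $g'$ to agree with $g$ on $\bigcup\bLa$ and to send $\mx'$ to $\mx$ via $F$.

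Next I would verify that $(\La\cup\{\mx'\},g')$ is a critical lamination of degree $3$. Let $V$ be the complementary component of $\bigcup\ch(\bLa)$ in $\disk$ whose boundary contributes $U$. Because the chords of $\bLa$-classes are pairwise non-crossing, $V$ is a convex subset of $\disk$, so $\mx'\subset\partial V\cap\uc$ gives $\ch(\mx')\subset\overline V$. Since $U$ is non-empty, $V$ has $\uc$-arcs on its boundary, so for any $\bLa$-class $\my$ the interior of $\ch(\my)$ is disjoint from $V$. Any last class $\ma\in\La\setminus\bLa$ is unlinked with all of $\bLa$, placing $\ch(\ma)$ in a single complementary component; the hypothesis implicit in the definition of an unambiguous continuation, that $U$ contains no $\La$-class, puts this component outside $V$. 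In either case the interiors of $\ch(\mx')$ and of the other convex hull are disjoint, so $\mx'$ is unlinked with every $\La$-class. Moreover, $F$ remains an admissible extension of $g'$, since the complementary intervals of $\bLa':=\bLa\cup\{\mx'\}$ refine those of $\bLa$; the critical leaves are unchanged, and order-preservation on each $\{\mc,\md\}$-unlinked class intersected with $\bigcup\bLa'$ is inherited from $F$.

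For independence of $F$, let $F'$ be another admissible extension of $g$. Both $F$ and $F'$ agree with $g$ on $\bigcup\bLa$, in particular at the endpoints of every arc of $U$, and both restrict to $W$ as cyclic-order-preserving bijections onto $\uc$. Hence each arc $(p,q)$ of $U$ is mapped by both $F$ and $F'$ onto the arc from $g(p)$ to $g(q)$, giving $F(U)=F'(U)$. Within each image arc, the unique $F$-preimage and the unique $F'$-preimage of a given point of $\mx$ lie in a common source-arc of $U$, yielding a canonical order-preserving bijection $\mx'_F\to\mx'_{F'}$; extending by the identity on $\bigcup\La$ produces an order isomorphism $h$ with $F'\circ h=F$ on $\mx'_F$, so $h$ conjugates the two dynamical laminations.

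The main delicate point is the bijectivity of $F|_W$, which is the cubic-critical-portrait fact that makes $\mx'$ a single pullback and $F(U)$ depend only on $g$; once it is in hand, the unlinkedness check is a convexity argument that turns on the ``no $\La$-class in $U$'' hypothesis, and the isomorphism with other admissible extensions follows because the combinatorial picture of arcs of $U$ and of their images is determined by $g$ alone.
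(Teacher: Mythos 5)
Your proof is correct and follows the same route as the paper's: the key observations are that $F|_U$ is one-to-one and order-preserving (which you justify via the $\{\mc,\md\}$-unlinked class $W\supset U$), that $F(U)$ depends only on $g$, and that the explicit map $(F')|_U^{-1}\circ F$ on $\mx'$, extended by the identity, is a conjugating order isomorphism. The main difference is that you spell out several details the paper leaves implicit — notably the verification that $\ch(\mx')$ is unlinked from all other $\La$-classes (using the convexity of the complementary component $V$ and the ``no $\La$-class in $U$'' hypothesis) — but the overall argument and the conjugacy formula coincide with the paper's proof.
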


\begin{proof}
  Since $\La$ contains two critical leaves and is of degree $3$, then
  $F|_{U}$ is one-to-one and order preserving, and the image set
  $F(U)$ is independent of the particular extension $F$.  Construct
  $(\La', g')$ as the unambiguous backward continuation pulling $\mx$
  into $U$ under $F$.  Since $F|_U$ is order-preserving, the resulting
  lamination $(\La', g')$ is of the same degree.  Further, for any
  other extension $F'$, we see that the corresponding unambiguous
  continuation $(\La'', g'')$ is conjugate to $(\La', g')$ by the map
  $f:\bigcup \La' \to \bigcup \La''$ defined by
  \[f(t) =
  \begin{cases}
    t & \mbox{if } t \notin \mx'\\
    (F'|_U^{-1} \circ F)(t) & \mbox{if } t \in \mx'
  \end{cases}\] where $\mx'$ is the $g'$-preimage of $\mx$.
\end{proof}

Observe that a backward continuation is not always
unambiguous; given a class, if we wish to find a preimage in a
$\La$-unlinked class $U$ containing a last class of $\La$, different
admissible extensions can give rise to finite laminations that are not
order isomorphic.  In such a situation, we must specify exactly where
the pull back of the image class is located in $U$ with respect to all
last classes contained in $U$.  See Figure~\ref{fig:ambiguous}.1.
\begin{figure}\label{fig:ambiguous}
  \centering
  \includegraphics{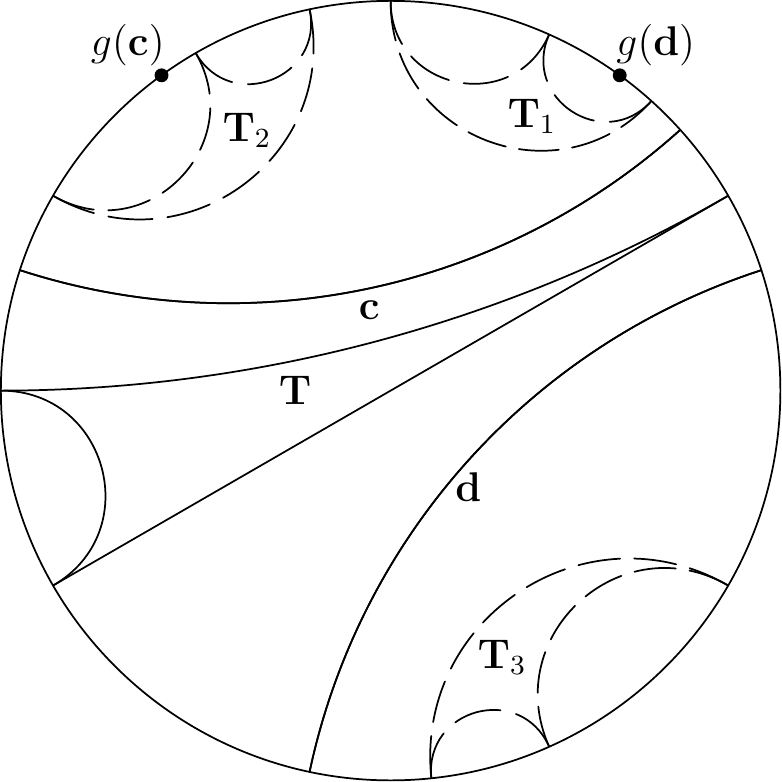}
  \caption{Consider the finite dynamical lamination $\La = \{\mc, \md,
    \T, g(\mc), g(\md)\}$ pictured here and a few possible backward
    continuations of $(\La, g)$ whose new elements are pictured in dotted
    lines. It is evident that all backward continuations of $(\La,g)$
    adding only a first preimage of $\T$ under $\md$ (like $\T_3$, as illustrated) are order isomorphic.
    However, the choice of a first preimage of $\T$ in the
    component of $\uc\sm \mc$, containing the last classes $g(\mc)$ and $g(\md)$,
    is ambiguous; $\T_1$ and $\T_2$ illustrate choices giving rise to
    backward continuations which are not order isomorphic.}
\end{figure}

\begin{defn}\label{d:itine}
  For a critical lamination $(\La, g)$ with critical leaves $\C=\{\mc,
  \md\}$, a \emph{$\La$-itinerary} is a finite string $i_0i_1\ldots i_n$ of
  $\C$-unlinked classes and critical classes.  A $\La$-itinerary which
  contains no critical classes is called \emph{non-critical}.  A
  $\La$-itinerary whose last element, but no other, is a critical leaf
  is called \emph{end-critical}. We associate to $x\in \uc$ (not
  necessarily contained in any $\La$-class) the \emph{maximal $\La$-itinerary}
  $\bi(x)=I_0I_1I_2\dots$ of $x$, which is the (typically finite)
  maximal well-defined sequence of $\C$-unlinked classes and critical classes
  such that for every admissible extension $F$ of $g$ we have that
  $F^j(x)\in I_j$; $\bi(x)$ begins at the moment zero, i.e.
  at the $\C$-unlinked class or critical class containing $x$.
  An initial string of the maximal
  $\La$-itinerary of $x$ is said to be a \emph{$\La$-itinerary} of $x$.
\end{defn}

An arbitrary itinerary is not assumed to be realized by an element
of a critical lamination, but is considered only as a
\emph{potential} itinerary of an element of $\La$. However, if $\mm$
is a $\La$-class we can also define for it an itinerary of different
length. Namely, the \emph{true $\La$-itinerary} $\ti(\mm)$ is
the maximal sequence of $\C$-unlinked classes and critical classes
containing the $g$-images of $\mm$ as long as these $g$-images are
still $\La$-classes. Since the $g$-images of $\La$-classes are
$\La$-classes, $\ti(\mm)$ coincides with an appropriate initial
string of $\bi(x)$ for each $x\in \mm$.
For example, the true itinerary of a last
class has only one entry, while its maximal itinerary may have many
entries. If $\La$ is fixed, we may talk about itineraries without
explicitly mentioning $\La$.


\subsection{Main continuation lemma}

Here, we describe the main ingredients used in the construction of
finite laminations in the next section. First we define the
following concept. Suppose that $A$ and $B$ are either edges or classes of a finite
lamination $\La$ such that $A\cap B$ is at most a point.
Consider components of $\ol{\disk}\sm (\ch(A)\cup \ch(B))$ and choose among
them the unique component $X$ which borders both $A$ and $B$. This
component $X$ is called the \emph{part of\, $\ol{\disk}$ between $A$
and $B$}. Also, recall that by \emph{leaves} we mean all
$\La$-classes consisting of two points (as well as their convex
hulls) while by \emph{edges} we mean boundary chords of convex hulls
of $\La$-classes.

\begin{figure}\label{fig:geo_illus}
  \centering
  \includegraphics[width=0.6 \linewidth]{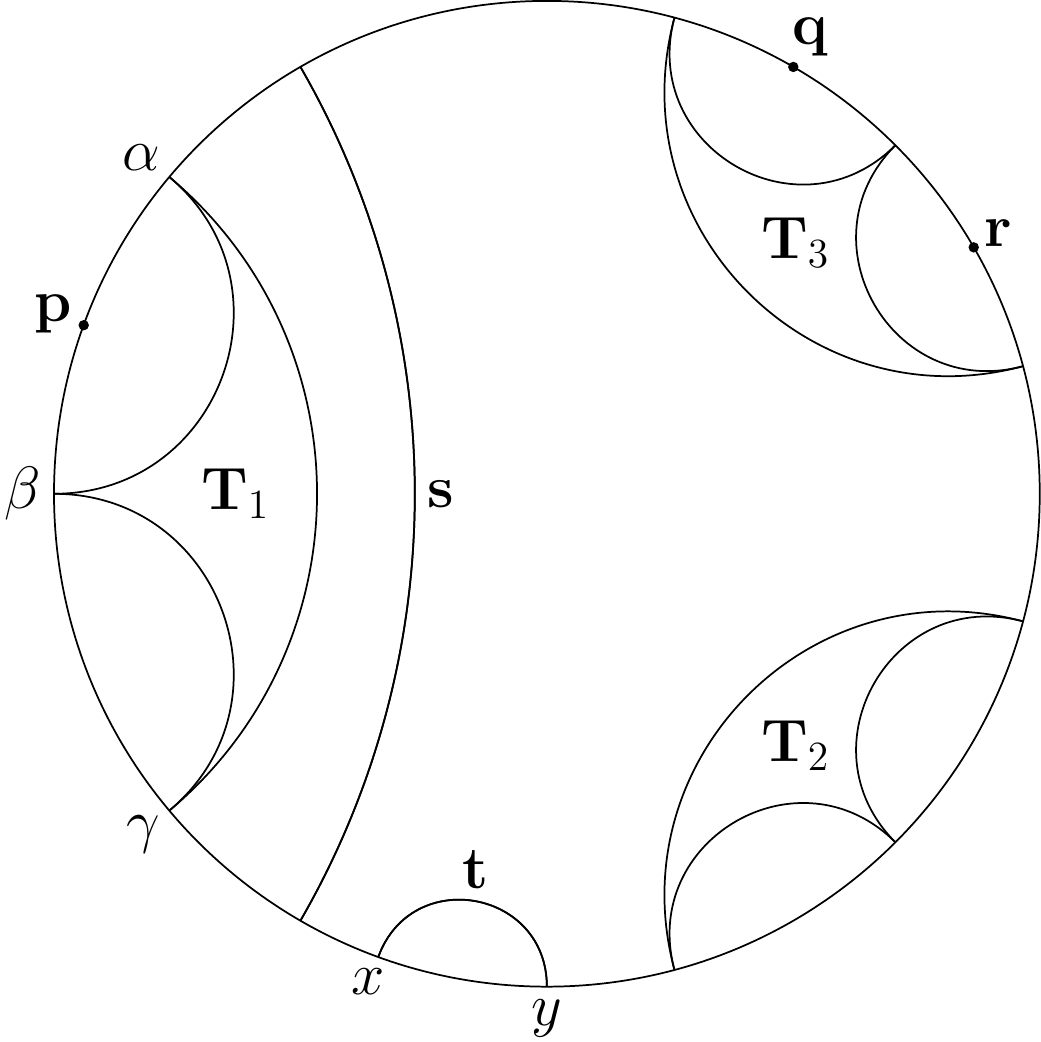}
  \caption{An illustration of concepts from Definition~\ref{adj}.  The
    finite lamination in question is $\La = \{\mp, \mq, \mr, \ms, \mt,
    \T_1, \T_2, \T_3\}$.  The triple $\T_1$ is leaf-like, and is
    adjacent to both the bud $\mp$ and the leaf $\ms$.  Its long edge
    is therefore $\{\alpha, \gamma\}$, its short edge is $\{\alpha,
    \beta\}$, and its empty edge is $\{\beta, \gamma\}$.  The class
    $\T_3$ is  adjacent to both $\mq$ and $\mr$.  The
    triple $\T_2$ is not leaf-like, and is not adjacent to any other
    class.  The arc under the leaf $\mt$ is $[x,y]$. There are no arcs
    under $\ms$. The arcs under the different edges of $T_1$ are
    $[\alpha, \beta]$, $[\beta, \gamma]$, and $[\gamma, \alpha]$.}
\end{figure}

\begin{defn}[Adjacent, leaf-like]\label{adj}
  Two classes (or edges) are \textbf{adjacent} if the part of
  $\ol{\disk}$ between them is disjoint from $\La$. A triple $\T$ is
  \textbf{leaf-like} if exactly one of the components of $\uc \sm \T$ is
  disjoint from $\La$; if $\T$ is leaf-like  and adjacent to a bud $\mx$,
  then the \textbf{short edge} of $\T$ is the edge $\ms$ of $\T$
  adjacent to $\mx$, the \textbf{long edge} of $\T$ is the edge $\mell$ of $\T$
  separating $\T \sm \mell$ (and $\mx$) from the rest of $\La$, and the
  \textbf{empty edge} of $\T$ is the remaining edge of $\T$. A leaf is
  always considered to be leaf-like. The \textbf{arc
    under a leaf} $\mt$ is the (open) component of $\uc \setminus \mt$
  which contains no class of $\La$, if such an interval exists.
  Finally, if $\ol{a}$ is an edge of a triple $\T$, then the
  \textbf{arc under} $\ol{a}$ is the component of $\uc \setminus
  \ol{a}$ which does not contain $\T \setminus \ol{a}$. Observe that
  the notion of the arc under a leaf and that of the arc under an edge
  of a triangle have somewhat different meaning. These notions are
  illustrated in Figure~\ref{fig:geo_illus}.1.
\end{defn}

\begin{defn}[$\mc$-lamination]\label{def:c-moment}
  Let $(\La, g)$ be a finite cubic WT-lamination. We say that $(\La, g)$ is
  a \textbf{$\mc$-lamination} if there is a triple $\T^\md$, a
  preimage $\mm(\lam)=\mm$ of $\mc$, and a preimage
  $\mell(\lam)=\mell$ of $\md$ such that the following hold.

  \begin{enumerate}
  \item $\T^\md$ is adjacent to both $\md'$ and $\mm$.
  \item \label{item:follow} If $r > 0$ is such that
    $g^r(\mm)=\mc'$, then the last triangle $\T^l$ is equal to $g^r(\T^\md)$.
    Further, if $0 \le k \le r$, then $g^k(\T^\md)$ is leaf-like and
    adjacent to $g^k(\mm)$.  In particular, the last triple is
    leaf-like and adjacent to $\mc'$.
  \item If $0 < k \le r$, the short edge of $\T^\md$ facing $\md'$ maps
    under $g^k$ to an empty edge of $g^k(\T^\md)$.
  \item The long edge of the last triple is adjacent to $\mell$.
  \end{enumerate}
  By replacing $\mc$ with $\md$ above, we obtain the definition of a
  \textbf{$\md$-lamination $\lam$}. More precisely, we say that $(\La, g)$ is
  a \textbf{$\md$-lamination} if there is a triple $\T^\mc$, a
  preimage $\mm(\lam)=\mm$ of $\md$, and a preimage
  $\mell(\lam)=\mell$ of $\mc$ such that the following holds.

  \begin{enumerate}
  \item $\T^\mc$ is adjacent to both $\mc'$ and $\mm$.
  \item \label{item:followd} If $r > 0$ is such that
    $g^r(\mm)=\md'$, then the last triangle $\T^l$ is equal to $g^r(\T^\mc)$.
    Further, if $0 \le k \le r$, then $g^k(\T^\mc)$ is leaf-like and
    adjacent to $g^k(\mm)$.  In particular, the last triple is
    leaf-like and adjacent to $\md'$.
  \item If $0 < k \le r$, the short edge of $\T^\mc$ facing $\mc'$ maps
    under $g^k$ to an empty edge of $g^k(\T^\mc)$.
  \item The long edge of the last triple is adjacent to $\mell$.
  \end{enumerate}
\end{defn}

\begin{figure}
  \centering
  \includegraphics{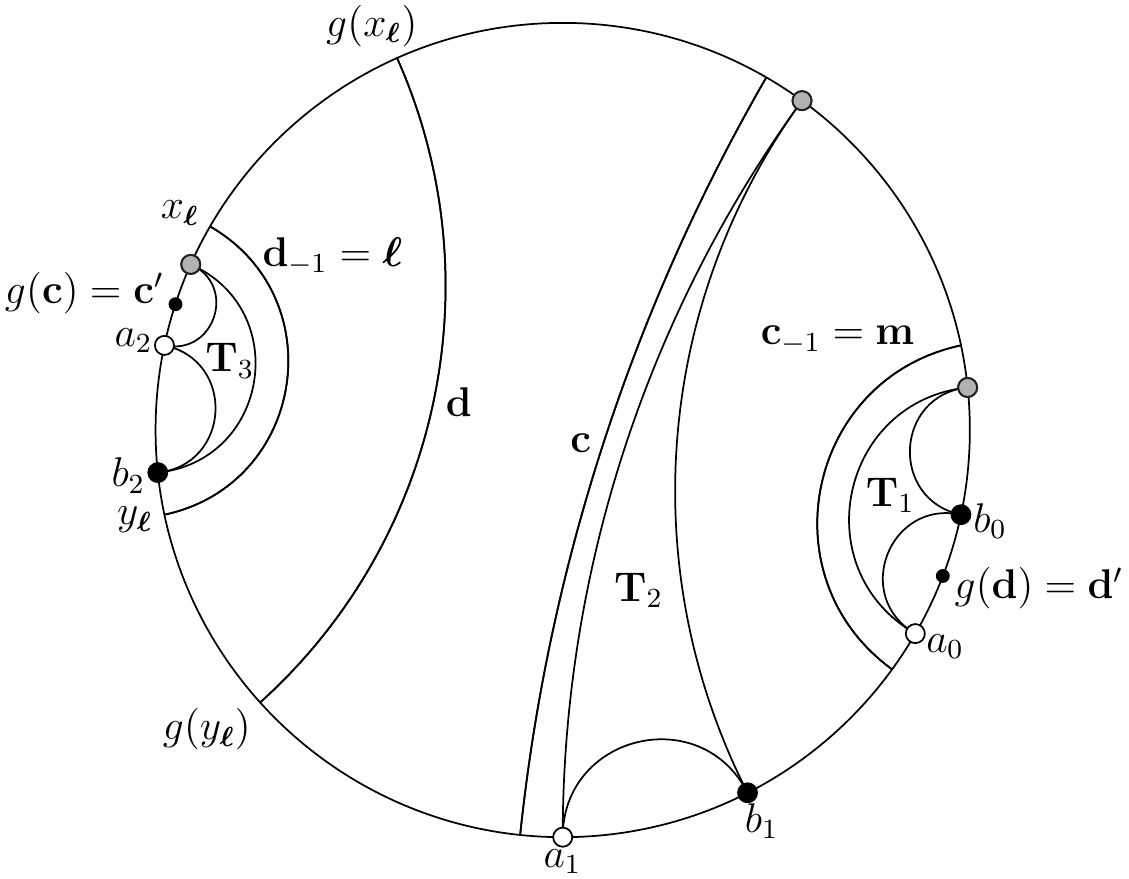}
  \caption{A simple $\mc$-lamination.  Like-shaded vertices are in
    the same orbit.  The monotone sequence of intervals is $([a_0,
    b_0], [a_1, b_1], [a_2, b_2], [g(x_\ell), g(y_\ell)])$, $g^2(\mm)=\mc'$
    and $r=2$.  The monotone
    itinerary is $(C, C, D, D)$, where $C$ denotes the $\{\mc,
    \md\}$-unlinked class whose boundary is $\mc$, and likewise $D$
    for $\md$.}
\end{figure}

Given a $\mc$-lamination $(\La,g)$, we observe that a particular
sequence of intervals describes the maximal $\La$-itinerary of $\md'$
in the sense of Definition~\ref{d:itine}. Let $\{a_0, b_0\}$ denote
the endpoints of the short edge of $\T^\md$, so $\md' \in K_0 = [a_0,
b_0]$. As in Definition~\ref{def:c-moment}, let $r$ denote the integer
such that $g^r(\T^\md) = \T^l$.  We then set $a_k = g^k(a_0)$ and $b_k
= g^k(b_0)$ for each $k \le r$.  Since $\{a_k, b_k\}$ is the 
empty edge of $g^k(\T^\md)$, it is apparent from the definition of a
$\mc$-lamination that $G^k([a_0,b_0]) = [a_k,b_k]=K_k$ for any
admissible extension $G$ of $g$ and any $k \le r$.

Let $\{x_\mell, y_\mell\}$ be the endpoints of $\mell$ labeled so
that  $K_r \subset [x_\mell, y_\mell]$.  Then $G^r(\md') \in
K_r \subset [x_\mell,y_\mell]$ for any admissible extension $G$ of
$g$. In other words, the first $r$ entries of the maximal
$\La$-itinerary of $\md'$ are uniquely determined by the images of
the short edge of $\T^\md$ facing $\md'$, i.e., empty edges of
triples $g^i(\T^\md)$ (see Definition~\ref{def:c-moment}(4)).
 Then, from the moment $r$ on, the
images of $\md'$ cannot be described through images of edges of
$\T^l$ because $\T^l$ is the last triple and images of $\T^l$ are
not defined.

However images of $\mell$ are defined and force the next uniquely
determined string of the maximal $\La$-itinerary of $\md'$. More
precisely, we are guaranteed that $G^{r+k+1}(\md') \in
[g^{k+1}(x_\mell), g^{k+1}(y_\mell)]$ if the interval $[g^k(x_\mell),
g^k(y_\mell)]$ contains no critical leaf.  We define $K_{r+1} =
[g(x_\mell), g(y_\mell)]$.  If $K_{r+k} = [g^k(x_\mell),
g^k(y_\mell)]$ is defined and contains no critical leaf, then we set
$K_{r+k+1} = [g^{k+1}(x_\mell), g^{k+1}(y_\mell)]$.  Since $\mell$ is
eventually critical, for some $N$ we have that $K_N$ contains a
critical leaf, and the process stops.  If $G$ is any admissible
extension of $g$, then $N$ is maximal such that $G^N|_{K_0}$ is
one-to-one. Set $N(\lam)=N$.

Recall that $\C=\{\mc, \md\}$. It follows that if $K_N$ is not the
closure of a connected $\C$-unlinked class then $i_0\dots i_{N-1}$ is
the maximal $\La$-itinerary of $\md'$ (and of every point in the
interior of $K_0$). Choose $i_N$ to be a \emph{connected $\C$-unlinked class
contained in $K_N$}. If $K_N$ is the closure of a connected
$\C$-unlinked class, then this class must be $i_N$ and the maximal
$\La$-itinerary of $\md'$ is $i_0\dots i_{N-1}i_N$.

The above introduced notation (number $r$, intervals $K_i=[a_i, b_i]$,
$\mell=\ol{x_\mell, y_\mell}$, etc.) will be used throughout the paper
when we talk about $\mc$-laminations.

\begin{defn}\label{d:seqint}
  For a $\mc$-lamination $(\La,g)$, call $K_0, \ldots, K_{N(\lam)}$ the
  \textbf{monotone sequence of intervals}.  (As usual, the
  parenthetically indicated dependence of the intervals or $N$ upon
  $\La$ may be omitted.)  The itinerary $i_0^\mon\dots i_N^\mon)=\monitin(\La)$ is
  called the \textbf{monotone itinerary (of $(\La, g)$)}. Define a
  monotone sequence of intervals and a monotone itinerary of a
  $\md$-lamination similarly.
\end{defn}

\begin{rem}\label{rem:class_itins}
  In this remark we use the notation from
  Definition~\ref{def:c-moment} (in particular, $r$ below is the positive integer
  such that $g^r(\mm)=\mc'$ and $r\le N$). We claim that the true itinerary of
  $\mm$ does not have $i_0^\mon\dots i_N^\mon = \monitin(\La)$ as its initial
  segment, because the orbit of $\mm$ passes through $\mc$ in the
  first $r$ steps, and $\monitin(\La)$ is non-critical. Let us also show
  that an initial segment of $\ti(\mell)$ cannot coincide with
  $i_r^\mon\dots i_N^\mon$. This is because either $\ol{i_N^\mon} \subsetneq K_N$, in which case the endpoints of $K_n$ (i.e., the endpoints of $g^{N-r}(\mell)$) cannot lie within $K_N$; or $g^{N-r}(\mell) = \md$, while $\monitin(\La)$ is not end-critical by definition.
\end{rem}

The following is another  consequence of the definition.

\begin{lem}\label{lem:charitin}Let $(\La,g)$ be a $\mc$-lamination.
  Suppose $(\La', g')$ is a continuation of $(\La, g)$ such that
  $\T^l(\La)$ is still adjacent to $\mell(\La)$ and to $\mc'$ in
  $\La'$.  If the true $\La'$-itinerary of a non-degenerate $\La'$-class
  $\mz$ begins with $\monitin(\La)$, then $\mz$ lies in $K_0$.
\end{lem}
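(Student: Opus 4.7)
The strategy is backward induction on $k = N, N-1, \ldots, 0$, proving $g'^k(\mz) \subset K_k$ at each step; taking $k = 0$ then yields the desired conclusion $\mz \subset K_0$.

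For the base case $k = N$, since the true $\La'$-itinerary of $\mz$ begins with $\monitin(\La) = i_0^\mon \cdots i_N^\mon$, the class $g'^N(\mz)$ lies in $i_N^\mon$, and by construction (see the paragraph preceding Definition~\ref{d:seqint}) $i_N^\mon$ is a $\C$-unlinked class contained in $K_N$. For the inductive step, assume $g'^{k+1}(\mz) \subset K_{k+1}$ for some $0 \le k < N$. Because $\La \subset \La'$ and both laminations have degree $3$, the critical leaves of $\La'$ must coincide with $\C = \{\mc, \md\}$, so any admissible extension $F'$ of $g'$ is an order-preserving bijection on each $\C$-unlinked class. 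The true-itinerary hypothesis yields $g'^k(\mz) \subset i_k^\mon$, and since $K_k$ contains no critical leaf (as $k < N$), $K_k$ lies in the single $\C$-unlinked class $i_k^\mon$.

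When $k \neq r$, the endpoints $a_k, b_k$ of $K_k$ belong to $\dom(g)$, and $F'$ sends them to the endpoints $a_{k+1}, b_{k+1}$ of $K_{k+1}$; the monotonicity of $F'|_{i_k^\mon}$ then gives $F'(K_k) = K_{k+1}$, so the preimage of $K_{k+1}$ in $i_k^\mon$ is exactly $K_k$, forcing $g'^k(\mz) \subset K_k$. When $k = r$, the endpoints of $K_r$ are vertices of the last triple $\T^l$ and lie outside $\dom(g)$, but $K_r \subset [x_\mell, y_\mell]$ with $x_\mell, y_\mell \in \dom(g)$ mapping to the endpoints of $K_{r+1}$. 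Hence $(F'|_{i_r^\mon})^{-1}(K_{r+1}) = [x_\mell, y_\mell]$, and therefore $g'^r(\mz) \subset [x_\mell, y_\mell]$. To upgrade this to $g'^r(\mz) \subset K_r$, we invoke the unlinkedness of the $\La'$-class $g'^r(\mz)$ with $\T^l$, $\mell$, and $\mc'$, together with the hypothesis that $\T^l$ remains adjacent in $\La'$ to $\mell$ and $\mc'$: this ensures that the complement $[x_\mell, y_\mell] \setminus K_r$ — consisting of the slivers between $\T^l$'s long edge and $\mell$, plus the arc under $\T^l$'s short edge that contains only $\mc'$ — cannot house a non-degenerate $\La'$-class. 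So $g'^r(\mz) \subset K_r$.

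The main obstacle is the transitional case $k = r$. There the purely bijective pullback argument fails to deliver $K_r$, because the monotone sequence's definition switches from iterating edges of $g^i(\T^\md)$ (which becomes undefined on $\T^l$'s vertices) to using $g$-images of $\mell$. The adjacency hypothesis is essential at this step: it forbids any new $\La'$-class from occupying the small regions around $\T^l$ that would otherwise accommodate $g'^r(\mz)$ outside $K_r$.
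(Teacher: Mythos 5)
Your backward induction is the same as the paper's, and the base case and the step for $k \neq r$ match the paper's argument closely. The divergence — and the gap — is at the transitional step $k = r$.

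At $k = r$ you obtain $(g')^r(\mz) \subset [x_\mell, y_\mell]$ and then ``invoke the unlinkedness of the $\La'$-class $(g')^r(\mz)$ with $\T^l$, $\mell$, and $\mc'$'' to squeeze it into $K_r$. But unlinkedness with $\mell$ (and with $\T^l$) is only automatic if $(g')^r(\mz)$ is \emph{distinct} from $\mell$ (resp.\ $\T^l$), and that distinctness is exactly what has to be proven. The class $\mell$ itself is a non-degenerate $\La'$-class lying inside $[x_\mell, y_\mell]$ but not inside $K_r$, so your assertion that ``$[x_\mell, y_\mell] \setminus K_r$ cannot house a non-degenerate $\La'$-class'' is not literally correct; it holds only after $\mell$ has been excluded by some other means. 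Thus your argument, as written, silently assumes the very fact it needs.

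The paper closes this gap with Remark~\ref{rem:class_itins}: it shows directly that the true $\La$-itinerary of $\mell$ cannot have $i_r^\mon \cdots i_N^\mon$ as an initial segment, because either the endpoints of $g^{N-r}(\mell)$ (which are the endpoints of $K_N$) fail to lie in the unlinked class $i_N^\mon$, or $g^{N-r}(\mell) = \md$ and hence the itinerary would become critical, contradicting the non-criticality of $\monitin(\La)$. Since a class with true $\La'$-itinerary beginning $\monitin(\La)$ has $(g')^r$-image whose true itinerary begins $i_r^\mon \cdots i_N^\mon$, this forbids $(g')^r(\mz) = \mell$. (The case $(g')^r(\mz) = \mc'$ is easier: $\mc'$ is a bud, while $(g')^r(\mz)$ is non-degenerate because $\monitin(\La)$ is non-critical.) You should add an argument of this sort before the unlinkedness step; otherwise the inductive step fails precisely at the index $r$ where the definition of the monotone sequence of intervals switches from iterating edges of $\T^\md$ to iterating $\mell$.
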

\begin{proof}
  We will show by induction on $t$ that $(g')^{N-t}(\mz) \subset
  K_{N-t}$.  Since $g'$ is a continuation of $g$, we may use $g$ and
  $g'$ interchangeably on $\La$-classes.

  That $(g')^N(\mz) \subset i_N^\mon \subset K_N$ follows by definition.
  Suppose by induction that $(g')^{j+1}(\mz) \subset K_{j+1}$ for $0
  \le j < N$.  We see that $(g')^j(\mz)$ is the unique class in $i_j^\mon$
  which maps onto $(g')^{j+1}(\mz)$.  If $j \neq r$, then $(g')^j(\mz)
  \subset K_j$ since $g'|_{i_j^\mon}$ is order-preserving.  On the other
  hand, if $j=r$, then $K_{j+1} = [g(x_\mell), g(y_\mell)]$ and
  $(g')^j(\mz) \subset [x_\mell, y_\mell]$ since $g'|_{i_r}$ is
  order-preserving.  However, we assume that $\mell$ is adjacent to
  $\T^l$ and $\mc'$ in $\La'$, and $(g')^j(\mz)$ is neither $\mell$
  nor $\mc'$ by Remark~\ref{rem:class_itins}, so $(g')^j(\mz) \subset
  [a_r, b_r] = K_r$.  Hence, by induction, $\mz \subset K_0$.
\end{proof}

Lemma~\ref{lem:extend1_lam} is an important inductive step in the
construction.

\begin{lem}\label{lem:extend1_lam}
  Let $(\La, g)$ be a $\mc$-lamination, and let $\tau = j_0 \ldots j_M
  \mc$ be an end-critical $\La$-itinerary where $M \le N(\La) -2$.
  Then there is a backward continuation $(\La', g')$ containing a leaf
  $\mell(\La')$ such that the following claims hold.
  \begin{enumerate}
  \item The leaf $\mell(\La')$ is a $g'$-preimage of $\mc$, and every
    leaf in $\La' \setminus \La$ is a forward image of $\mell(\La')$.
  \item The leaf $\mell(\La')$ is adjacent to $\md'$ and $\T^\md$.
  \item The true itinerary of $\mell(\La')$
    begins with $(i_0^\mon, \ldots, i_N^\mon)$, ends with $\tau$, and these two segments of the itinerary are disjoint.
  \item The common last triple $T^l(\La) = T^l(\La')$ of both
    laminations is still adjacent in $\La'$ to both $\mell(\La)$ and
    $\mc'$.
  \item The length of the maximal $\La'$-itinerary of $\md'$ is of length
  at least $N(\La)+2$.
  \end{enumerate}
\end{lem}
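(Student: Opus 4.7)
The plan is to build $(\La', g')$ by adjoining preimage leaves one at a time, traversing backward along the target itinerary $i_0^\mon\,\ldots\,i_N^\mon\,j_0\,\ldots\,j_M\,\mc$. Starting from the critical leaf $\mc$, I would use repeated unambiguous backward continuations via Lemma~\ref{l:unambi} to introduce first preimages in the successive unlinked classes, ending with the new leaf $\mell(\La')$ placed in $K_0$ adjacent to $\md'$.

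The construction splits naturally into two phases. \emph{Phase~1} pulls $\mc$ backward through $\tau$: I would successively form first preimages of $\mc$ in (the appropriate $\bLa$-subclass of) $j_M$, then in $j_{M-1}$, and so on, ending with a leaf in $j_0$. Each target is a $\C$-unlinked class, and the hypothesis $M \le N(\La) - 2$ will be used to guarantee that the particular $\bLa$-unlinked subclass receiving each pullback contains no last class of the current intermediate lamination, so that Lemma~\ref{l:unambi} yields unambiguity up to order isomorphism. \emph{Phase~2} continues the pullback through $i_N^\mon,\,i_{N-1}^\mon,\,\ldots,\,i_0^\mon$. By the discussion preceding Lemma~\ref{lem:charitin}, any admissible extension of $g$ (or $g'$) restricts to an order-preserving bijection carrying $K_{k}$ onto $K_{k+1}$ for $0 \le k < N$; hence the pullback forced into $i_k^\mon$ sits inside $K_k$. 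The final pullback is $\mell(\La') \subset K_0$, and I would place it against $\md'$. Since $\T^\md$ is adjacent to $\md'$ in $\La$ and no class of $\La$ lies between them, $\mell(\La')$ automatically becomes adjacent to both $\md'$ and $\T^\md$ in $\La'$.

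Claims (1)--(3) follow by direct bookkeeping: only iterated $\mc$-pullbacks have been added, giving (1); the Phase~2 placement yields (2); and reading the construction forward assigns to $\mell(\La')$ the true itinerary $i_0^\mon,\ldots,i_N^\mon,j_0,\ldots,j_M,\mc$, with the two named segments disjoint because $\mc$ appears only at the final position (giving (3), with Lemma~\ref{lem:charitin} used to confirm that $\mell(\La')$ indeed lies in $K_0$ once the monotone initial segment is known). Claim (4) holds because every new class lies in one of $j_0, \ldots, j_M$ or inside $K_0$, and none of these meets the planar region bounded by $\T^l$, $\mell(\La)$, and $\mc'$; hence the adjacency of $\T^l$ to $\mell(\La)$ and to $\mc'$ is preserved in $\La'$. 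Claim (5) holds because placing $\mell(\La')$ adjacent to $\md'$ pins down the behaviour of any admissible extension of $g'$ on a neighbourhood of $\md'$ for one further step: the orbit of $\md'$ must then shadow that of $\mell(\La')$ into at least one additional $\C$-unlinked class beyond $i_N^\mon$, lengthening the maximal $\La'$-itinerary of $\md'$ to at least $N(\La)+2$.

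The main obstacle is controlling unambiguity throughout Phase~1: at each intermediate step one must check that the chosen $\bLa$-unlinked subclass into which the current preimage of $\mc$ is placed does not contain a last class of the intermediate lamination, so that Lemma~\ref{l:unambi} applies and the chain of pullbacks is determined up to order isomorphism. This is exactly the role of the quantitative hypothesis $M \le N(\La) - 2$: it bounds the length of $\tau$ relative to the combinatorial depth of the $\mc$-lamination and forces the pullback chain to avoid the neighbourhoods of $\T^l$, $\mc'$, and $\md'$ in which ambiguous choices would otherwise be unavoidable.
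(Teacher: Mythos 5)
Your construction of $\mell(\La')$ by pulling $\mc$ backward along the concatenated itinerary $i_0^\mon\ldots i_N^\mon\,j_0\ldots j_M\,\mc$ has a genuine gap: nothing prevents the monotone prefix $i_0^\mon\ldots i_{N-1}^\mon$ from reappearing as an internal substring of that concatenation. If $\monitin(\La)$ has a periodic structure, a pullback class obtained partway through your ``Phase~2'' can already have true itinerary beginning with $\monitin(\La)$ and hence (by exactly the argument of Lemma~\ref{lem:charitin} and the monotone nesting of the $K_j$) lie in $K_0$. Since $\md'$ is a last class in $K_0$, that pullback is ambiguous, and worse, a leaf placed there at an intermediate step may sit between $\md'$ and the spot you need for the final leaf, so that $\mell(\La')$ cannot be made adjacent to $\md'$ --- breaking claims (2) and (5). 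The paper avoids this precisely by inserting the padding string $C\,Z^k$ between $\monitin(\La)$ and $\tau$, with $C$ and $Z$ and $k$ chosen so that the resulting itinerary $\tau' = i_0^\mon\ldots i_N^\mon\,C\,Z^k\,\tau$ contains \emph{exactly one} copy of $i_0^\mon\ldots i_{N-1}^\mon$; this is what guarantees the last pullback, and only the last, lands in $K_0$. It is also what gives the separation of the two segments in claim (3) rather than having them merely abut.

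Two smaller points. First, you attribute all disambiguation to the hypothesis $M\le N(\La)-2$, but that bound only ensures that $\tau$ is too short for a Phase~1 pullback to land in $K_0$; it does not rule out Phase~1 pullbacks into $(x_\mell, y_\mell)$, which contains the last classes $\T^l$ and $\mc'$. The paper handles those by a specific rule (``any pullback into $(x_\mell,y_\mell)$ must be under the empty edge of $\T^l(\La)$''), which you would need as well. Second, your argument for claim (4) asserts the new classes never meet the region bounded by $\T^l$, $\mell(\La)$, and $\mc'$; that is false in general --- some pullbacks do enter $(x_\mell,y_\mell)$ --- and the adjacencies survive only because of the empty-edge rule, not because the region is avoided.
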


\begin{proof}
  We will construct $(\La', g')$ by adding a sequence of consecutive
  pullbacks of $\mc$ until we come to the leaf $\mell(\La')$. We will
  use the following rule when dealing with ambiguous pullbacks: any
  pullback into $(x_\mell, y_\mell)$ must be under the empty edge of
  $\T^l(\La)$.  (Any pullbacks into $K_0$ would also be
  ambiguous, but such will not occur until the last pullback we take
  in this proof.)

  Take iterative preimages of $\mc$ along the itinerary $\tau$, using
  the rules above for ambiguous pullbacks.  Note that no pullback is
  taken into the interval $K_0$, since $\tau$ is too short to contain
  the itinerary of any point in $K_0$.  Denote the leaf so obtained by
  $\mc^\tau$.  (Note that part or all of the orbit of $\mc^\tau$ may
  already be represented in $\La$; we do not add extra copies in order
  to preserve that $(\La', g')$ is a degree 3 finite lamination.)

  Next we consider the problem of finding a preimage of $\mc_\tau$
  in $K_0$ which increases the length of the maximal itinerary of
  $\md'$. First let us show that $i_0=\dots=i_N$ is impossible.
  Recall from the discussion before
  Definition~\ref{d:seqint} that $i_N$ is always a \emph{connected
  $\C$-unlinked class contained in $K_N$}. By the definition of the
  monotone sequence of intervals, $K_{N-1}\subset i_N\subset K_N$.
  Since the map $g$ preserves orientation on $i_N$ and $i_N$ is an arc,
  it follows that $K_0\subset K_1\subset \dots \subset
  K_{N-1}\subset K_N$. Thus, $K_1=g(K_0)$ contains $\md'$ and is not the interval under an empty edge
  of $g(\T^{\md})$, a contradiction with the definition of a
  $\mc$-lamination.

  Thus, we may assume that some of the sets $i_0^\mon, \dots, i_N^\mon$
  are distinct. Choose a $\C$-unlinked class $C \neq i_{N-1}^\mon$ which
  does not contain the common image of the endpoints of $i_N^\mon$, and
  choose a $\C$-unlinked class $Z$ which is neither $i_0^\mon$ nor
  $i_{N-1}^\mon$.  Then, if $k > 1$ is such that 
  $\monitin(\La)$ contains no string of $Z$'s of length $k$ (denoted
  $Z^k$), we choose the itinerary $\tau' = i_0 \ldots i_N C Z^{k}
  \tau$.  Let us show that $\tau'$ contains exactly one copy of the
  string $i^\mon_0 \ldots i^\mon_{N-1}$. Indeed, $i^\mon_1\dots i^\mon_N\ne i^\mon_0 \ldots
  i^\mon_{N-1}$ because not all entries $i^\mon_0, \dots, i^\mon_N$ are equal. Also,
  $i^\mon_2\dots i^\mon_NC\ne i^\mon_0 \ldots i^\mon_{N-1}$ because $C\ne i^\mon_{N-1}$. No
  substring in $\tau'$ which ends with $Z$ can be a copy of $i^\mon_0\ldots
  i^\mon_{N-1}$ because $Z\ne i^\mon_{N-1}$. No copy of $i^\mon_0 \ldots i^\mon_{N-1}$ can
  contain a string of $k$ $Z$'s by the choice of $k$. Finally, no copy
  of $i^\mon_0 \ldots i^\mon_{N-1}$ can begin with $Z$, since $Z \neq i^\mon_0$. So
  indeed, $\tau'$ contains exactly one copy of the string $i^\mon_0 \ldots
  i^\mon_{N-1}$.

  Now, construct preimages of $\mc_\tau$ moving back from $\mc_\tau$
  along the itinerary $i^\mon_0 \ldots i^\mon_N C Z^k$.  By the previous
  paragraph, all such pullbacks are taken outside of $K_0$ except the
  last.  The last pullback $\mell(\La')$ is to be taken in $K_0$ by
  Lemma~\ref{lem:charitin}, and we choose it to be adjacent to
  $\md'$. Call the resulting lamination $(\La', g')$.  Now every point
  of the component of $\uc\sm \mell(\La')$ containing $\md'$ has
  maximal $\La'$-itinerary beginning with $i^\mon_0 \ldots i^\mon_N$ and has length at least $N(\La)+2$.
  This completes the proof.
\end{proof}

Given a $\mc$-lamination or a $\md$-lamination $\La$, denote by
$\lo(\La)=\lo$ the long edge of $\T^l=\T^l(\La)$ and by $\sh(\La)=\sh$ its short edge.

\begin{lem}\label{lem:extend_lam}
  Let $(\La, g)$ be a $\mc$-lamination, and let $\tau = j_0 j_1 \ldots
  j_k \mc$ be an end-critical $\La$-itinerary for some $k \le
  N(\La)-2$ (see Definition~\ref{d:seqint}). Let $t_0 t_1 \dots t_m
  \md$ be an initial segment of the true $\La$-itinerary of
  $\ell(\La)$.  Then there is a $\md$-lamination $(\La'', g'')$ which
  continues $(\La, g)$ in two steps: {\rm(1)} construct $(\La', g')$
  by $\tau$ as in Lemma~\ref{lem:extend1_lam}, and then {\rm(2)} add,
  in the appropriate way, a segment of the orbit of $\T^l(\La)$ which
  follows the orbit of $\ell(\La)$ so that an initial segment of the
  true $\La''$-itinerary of $\T^l(\La)$ coincides with $t_0\dots
  t_m$. Moreover, the maximal $\La''$-itinerary of $\md'$ is of length
  at least $N(\La)+2$, and $\ell(\La'')=\ell(\La')$ has  $\tau$ as a subsegment of its
  true $\La''$-itinerary.
\end{lem}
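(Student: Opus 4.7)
The argument proceeds in the two stages indicated in the statement. \emph{Stage 1:} Apply Lemma~\ref{lem:extend1_lam} to the pair $((\La,g),\tau)$ to obtain a backward continuation $(\La',g')$ containing a new leaf $\mell(\La')$. By that lemma, $\mell(\La')$ is a $g'$-preimage of $\mc$ adjacent to both $\md'$ and $\T^\md$, its true $\La'$-itinerary has $\tau$ as a terminal subsegment, and the maximal $\La'$-itinerary of $\md'$ has length at least $N(\La)+2$. Stage 1 already supplies two of the three ``moreover'' claims: enlarging a finite lamination only restricts the family of admissible extensions of $g$, so the maximal itinerary of $\md'$ can only grow in passing from $\La'$ to $\La''$; and since stage 2 (below) never touches the orbit of $\mell(\La')$ past $\mc$, the leaf $\mell(\La'')=\mell(\La')$ still has $\tau$ as a subsegment of its true $\La''$-itinerary.

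\emph{Stage 2:} Let $r''\ge m+1$ be the unique integer with $g^{r''}(\mell(\La))=\md'(\La)$, which exists because $\mell(\La)$ is a preimage of $\md$ in $\La$ and $\md'(\La)$ is the last image of $\md$ in $\La$. For $k=1,\dots,r''$ we successively perform a first forward continuation, adding the triple $g^k(\T^l(\La))$ as the image of $g^{k-1}(\T^l(\La))$. At each step we choose the admissible extension so that the three new vertices land near the images of the corresponding vertices of the previous triple. Because in $\La$ the long edge of $\T^l(\La)$ is adjacent to $\mell(\La)$ and the short edge is adjacent to $\mc'$, an inductive pullback argument shows that each $g^k(\T^l(\La))$ is leaf-like, adjacent to $g^k(\mell(\La))$ along its long edge, adjacent to $g^k(\mc')$ along its short edge, and unlinked with every existing class of $\La''$: any interfering class would pull back under an orientation-preserving branch of $g^{-k}$ to a class in the empty region between $\T^l(\La)$ and $\mell(\La)$ or $\mc'$ in $\La'$. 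It follows that $g^k(\T^l(\La))$ shares a $\C$-unlinked class with $g^k(\mell(\La))$ for $0\le k\le m$, so the true $\La''$-itinerary of $\T^l(\La)$ begins with $t_0 t_1\dots t_m$, as required.

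\emph{Verification of the $\md$-lamination axioms.} We check Definition~\ref{def:c-moment}(1)--(4) for $(\La'',g'')$ with $\T^\mc=\T^l(\La)$, $\mm(\La'')=\mell(\La)$, and $\mell(\La'')=\mell(\La')$. Condition (1) is inherited from $\La$ since stages 1 and 2 add no class in the empty regions next to $\T^l(\La)$. Condition (2) is exactly what stage 2 produces by construction. Condition (3) holds because $\mc'$ is the last image of $\mc$ in $\La$ and neither stage adds $g(\mc')$, so the $g$-image of the short edge of $\T^\mc$ facing $\mc'$ has no class in the arc under it and is therefore an empty edge of $g(\T^\mc)$; this propagates to every iterate. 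Condition (4)---the interface between the two stages---holds because the long edge of $\T^l(\La'')=g^{r''}(\T^l(\La))$ is the $g^{r''}$-image of an edge adjacent to $\mell(\La)$ and so lands adjacent to $g^{r''}(\mell(\La))=\md'(\La)$, while stage 1 placed $\mell(\La')$ adjacent to $\md'$ on the same side (between $\md'$ and $\T^\md$), so the long edge is in fact adjacent to $\mell(\La')=\mell(\La'')$.

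\emph{Main obstacle.} The technical heart of the proof is the inductive adjacency bookkeeping in stage 2, and in particular the verification that at the terminal step $k=r''$ the long edge of $g^{r''}(\T^l(\La))$ falls on the side of $\md'$ occupied by $\mell(\La')$ rather than on the side occupied by $\T^\md$; this geometric bookkeeping is exactly what the phrase ``in the appropriate way'' makes precise.
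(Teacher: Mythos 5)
Your proposal follows the same two-stage route as the paper: apply Lemma~\ref{lem:extend1_lam} to obtain $(\La',g')$, and then forward-continue by adjoining the triples $\T_{n+1},\dots,\T_{n+s+t}$ (your $r''$ equals the paper's $s+t$), with the image of the long edge shadowing the orbit of $\mell(\La)$ and the image of the short edge chosen empty, exactly as in the paper's conditions (i)--(ii). The paper's explicit observation that all $\T_{n+i}$ for $0<i\le s+t$ lie outside $[x_\mell,y_\mell]$ --- so that $\sh(\La)$ stays adjacent to $\mc'$ and the terminal adjacency to $\mell(\La')$ works out --- is the precise version of the bookkeeping you flag as the ``main obstacle''; apart from the slightly loose phrase ``adjacent to $g^k(\mc')$'' (this object is not a class; what is meant, and what you say correctly in your verification of condition (3), is that the image of the short edge is an empty edge), the two arguments coincide.
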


The statement of Lemma~\ref{lem:extend_lam} also holds, with appropriate swapping of
$\mc$ and $\md$ related objects.

\begin{proof}
  Let $(\La', g')$ be the backward continuation given by
  Lemma~\ref{lem:extend1_lam}.  We shall continue $(\La', g')$ to a
  $\md$-lamination which will be constructed by adding images of
  $\T^l(\La')$ to $(\La', g')$ to obtain a $\md$-lamination $(\La'',
  g'')$ with $\mell(\La')=\mell(\La'')$. Recall that since $\La$ is a
  $\mc$-lamination, there is also a leaf $\mell(\La)$ which is
  different from $\mell(\La')$. Let $\T^l=\T_n$.

  Let $s$ and $t$ be such that $g^s(\mell(\La))=\md$ and
  $g^t(\md)=\md'$.  Since $\lo$ is adjacent to $\mell(\La)$ and $\sh$
  is adjacent to the last class $\mc'$, we can define $\T_{n+1}$,
  \ldots, $\T_{n+s+t}$ up to isomorphism so that for any $i \in \{0,
  \ldots, s+t\}$ we have
  \begin{enumerate}
  \item $(g'')^i(\lo)$ is adjacent to $g^i(\mell(\La))$, and
  \item one component of $\uc \setminus (g'')^i(\sh)$ contains no
    classes of $\La$.
  \end{enumerate}
  Note that by the properties of $\mc$-laminations all leaves
  $g^i(\mell(\La))$ are located outside $[x_\mell, y_\mell]$. Hence
  the construction implies that $\T_{n+i}$ is outside of $[x_\mell,
  y_\mell]$ for all $0<i\le s+t$, which implies that the short edge
  $\sh$ of $\T_n$ is still adjacent to $\mc'$ in $\La''$.  It is now
  easy to verify that $(\La'', g'')$ is a $\md$-lamination and has all
  the desired properties.
\end{proof}

Beginning with any $\mc$-lamination $(\La_0, g_0)$, by
Lemma~\ref{lem:extend_lam} one can construct an increasing sequence of
critical laminations $\left( (\La_i, g_i)\right)_{i=1}^\infty$, where
$(\La_{i+1}, g_{i+1})$ continues
$(\La_i,g_i)$. This is done on the basis of a sequence of end-critical
finite itineraries used in the applications of
Lemma~\ref{lem:extend_lam} as a sequence $\tau$.  As we rely on
Lemma~\ref{lem:extend_lam}, we continue a $\mc$-lamination to a
$\md$-lamination, then to a $\mc$-lamination, etc. Each critical
lamination contains a triple $\T_1$, and the orbit of $\T_1$ is
continued in each successive critical lamination by adding a forward
segment to the orbit of $\T^l(\La)$ which follows the orbit of
$\ell(\La)$ and has the appropriate initial segment of its true
itinerary.

In other words, on each step, when we continue $\La$, we fulfill two
tasks: (1) create the leaf $\ell(\La')=\ell(\La'')$ which has any
given subsegment in its true itinerary, and (2) add a segment of the
orbit of $\T^l(\La)$ which has a subsegment coinciding with the
appropriate (up to the critical leaf) subsegment of the true itinerary
of $\ell(\La)$.  To go on with this construction we need to choose
finite itineraries which determine the construction on each step.  We
can choose them to cover all possible itineraries. Then applying the
construction infinitely many times we obtain the limit lamination
$\bigcup_{i=1}^\infty \La_i$ equipped with the limit map $g$,
containing (combinatorially) a wandering triangle which has an
itinerary in which any finite itinerary shows at least once.

\section{Realization and Density}
\subsection{Realizing the combinatorial lamination}\label{constr}

In this section, we show that any $\mc$-lamination $(\La_0, g_0)$ can
be continued (up to conjugacy by an order isomorphism) to a
$\si_3$-invariant lamination $\La$ containing a wandering triangle
whose forward orbit is dense in the lamination.  We first construct a
combinatorial lamination $(\La_\infty, g_\infty)$ with appropriate
properties by repeated application of Lemma~\ref{lem:extend_lam}.  We
show that $g_\infty$ restricted to the forward orbit of $\T_1$
satisfies an expansion property (see Definition~\ref{defn:top_exact}),
and is therefore conjugate to a restriction of $\sigma_3$ to some
subset $A$.  The $3$-invariant lamination $\La$ will be the closure of
the induced lamination on $A$.

Recall that end-critical itineraries were defined in
Definition~\ref{d:itine}.

\begin{defn}
  A sequence $(\tau_n)_{n=0}^\infty$ of end-critical
  itineraries ending in $\mc$ is called \emph{full} if the length of $\tau_n$
  converges to infinity and for any finite
  precritical itinerary $\tau$ ending in $\mc$ there is an integer $n$
  such that the itinerary $\tau_n$ ends in $\tau$. Similarly we define
  a full sequence of pairwise distinct precritical itineraries ending
  in $\md$.
\end{defn}

Let $(\La_0, g_0)$ be fixed for the rest of the section; we assume
that the lengths of the itineraries of $\mc$ and $\md$ are both
greater than two.  Choose a full sequence of itineraries
$(\tau_n^\mc)_{n=0}^\infty$ ending in $\mc$ such that the following hold.
\begin{itemize}
\item The length of $\tau_0^\mc$ is less than the length
  of the $\La_0$-itinerary of $\md'$ by at least $2$, and
\item The length of $\tau_{i+1}^\mc$ is at most one more than the
  length of $\tau_i^\mc$.
\end{itemize}
Similarly define $(\tau_n^\md)_{n=1}^\infty$.  We then inductively
define laminations $((\La_i, g_i))_{i=1}^\infty$, where even indices
correspond to $\mc$-laminations and odd indices correspond to
$\md$-laminations, as follows.
\begin{itemize}
\item If $\La_i$ is defined for an even integer $i = 2k$, we
  use Lemma~\ref{lem:extend_lam} with $\tau^\mc_k$ to obtain a
  $\md$-lamination $(\La_{i+1}, g_{i+1})$.
\item If $\La_i$ is defined for an odd integer $i = 2k-1$,
  we use Lemma~\ref{lem:extend_lam} with $\tau^\md_k$ to obtain a
  $\mc$-lamination $(\La_{i+1}, g_{i+1})$.
\end{itemize}
Then $\bigcup \La_i = \La_\infty$ with the natural map $g_\infty$ is
an infinite combinatorial lamination with a wandering class $\T_1$ and
a full set of preimages of both $\mc$ and $\md$, i.e., exactly one
leaf corresponding to each end-critical $\La_0$-itinerary.

Due to the properties of $\mc$- and $\md$-laminations, the orbit of
$\T_1$ in $(\La_\infty, g_\infty)$ is most easily analyzed in terms of
``closest approaches''.  For example, $\T^l(\La_{2k})$ is adjacent to
$\mc'$ in $\La_{2k}$, so this constitutes a closest approach in the
lamination $\La_{2k}$.  These sequences of closest
approaches are particularly important to us, so we will keep two
sequences in mind: let $(p_n)_{n=0}^\infty$ be the sequence so that
$\T_{p_n} = \T^l(\La_n)$, and set $(k_n)_{n=0}^\infty$ to be the
sequence so that $\T_{k_n}$ is adjacent in $\La_n$ to the appropriate critical
leaf, following its orbit in a leaf-like manner to $\mc'$ or $\md'$.
Hence, we have $k_1 < p_1 < k_2 < p_2 < \ldots$.  Further $p_1 -k_1 = p_3 -
k_3 = \cdots = p_{2n+1} - k_{2n+1} = \cdots$, and $p_2-k_2 = p_4-k_4 = \ldots =
p_{2n}-k_{2n} = \cdots$ since these are the lengths of the segments of
orbits from $\mc$ to $\mc'$, and $\md$ to $\md'$, respectively.

Recall that an edge (termed the \emph{short edge}) of $\T_{k_n+1}$
is adjacent in $\La_n$ to the image of a critical leaf, so no
previous triple lies in the interval underneath that edge.  In
continuing its orbit, the images of this edge are adjacent to the
images of the critical leaf, so no previous triple lies under the
corresponding edges of $\T_{k_n+2}$, $\T_{k_n+3}$, \ldots,
$\T_{p_n}$.  Similarly, the part of the disk between $\T_{p_n}$ and
$\ell(\La_n)$, as well as the part of the disk between their first
$p_{n+1}-p_n$ images, contains no previous triple. Observe that
$p_{n+1}-p_n\to \infty$, since $(p_{2n})_{n=1}^\infty$ and
$(p_{2n+1})_{n=1}^\infty$ are monotone and must eventually
accommodate all itineraries $\tau^\mc_i$ and $\tau^\md_i$.

We will work with the lamination $\La_\T = \{\T_n\,\mid\,n \ge 1\}$
with basis $A_\T = \bigcup_{n=1}^\infty \T_n$ and the map $g_\T =
g_\infty|_A$.  Note that $A_\T$ is forward-invariant under the map
$g_\T$.  We wish to show that $g_\T$ is conjugate to a restriction
of $\sigma_3$.  To proceed we need Theorem~\ref{extinv}; to state it
we need Definitions~\ref{defn:top_exact} and ~\ref{defn:extendable}.
Recall that maps of degree $3$ are defined right before
Definition~\ref{finwtdef}. Below we work with a set $A\subset \uc$
which is considered with the circular order on it; the topology of
$A$ as a subspace of $\uc$ is ignored.

\begin{defn}\label{defn:top_exact}
  A map $f:A\to A$ of degree $3$ is said to be \emph{topologically
    exact} if for each pair of distinct points $x,y \in A$ there
  exists an integer $n\ge 1$ such that either $f^n(x)=f^n(y)$, or
  $f([f^n(x), f^n(y)]_A)\not\subset [f^{n+1}(x), f^{n+1}(y)]_A$. If we
  want to emphasize the precise value of $n$, we say that $f$ maps
  $[x, y]_A$ \emph{out of order in $n+1$ steps}.
\end{defn}

\begin{defn}[$\si$-extendable]\label{defn:extendable}
  A degree $3$ map $f:A\to A,$ $A\subset \uc$ is
  \emph{$\si$-extendable} if for some $\si_3$-invariant set $A'\subset
  \uc$ the map $f|_A$ is conjugate to the map $\si_3|_{A'}:A'\to A'$
  and the conjugation is a circular order preserving bijection. Any
  such $A'$ is called an \emph{embedding} of $A$ into $\si_3$.
\end{defn}

We can now state an important technical result proven in \cite{bo04}.

\begin{thm}[{\cite[Theorem 3.3]{bo04}}]\label{extinv}
  If $f:A\to A\subset \uc$ is a topologically exact map of degree $3$,
  and $A$ is countable without fixed points, then $f$ is
  $\si$-extendable.
\end{thm}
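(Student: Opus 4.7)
The plan is a back-and-forth inductive construction of an order-preserving bijection $h : A \to A' \subset \uc$ satisfying $\si_3 \circ h = h \circ f$.

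First I would enumerate $A = \{a_1, a_2, \ldots\}$ and fix the partition points $x_0 < x_1 < x_2 < x_3 = x_0$ from the degree-$3$ hypothesis, along with the standard partition $J_0, J_1, J_2$ of $\uc$ on which $\si_3$ is bijectively increasing.  I would then inductively build finite sets $F_1 \subset F_2 \subset \cdots \subset A$ with $a_n \in F_n$, together with order-preserving injections $h_n : F_n \to \uc$ such that $h_n(a) \in J_{i_0(a)}$ (where $i_0(a)$ is the index of the partition arc containing $a$) and $\si_3 \circ h_n = h_n \circ f$ wherever both sides are defined on $F_n$.  The limit $h := \bigcup_n h_n$ will then have the required properties on $A = \bigcup_n F_n$, and $A' := h(A)$ will be $\si_3$-forward-invariant.

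The inductive step adding $a_{n+1}$ splits into cases based on its $f$-relationship with $F_n$.  If the forward orbit of $a_{n+1}$ eventually meets $F_n$ at some smallest iterate $f^k(a_{n+1})$, then $h(a_{n+1})$ is uniquely forced by the equivariance chain together with the partition constraints on the intermediate iterates; otherwise, there is freedom to place $h(a_{n+1})$ anywhere in $J_{i_0(a_{n+1})}$.  Symmetrically, if some $b \in F_n$ satisfies $f^j(b) = a_{n+1}$, then $h(a_{n+1}) = \si_3^j(h(b))$ is forced.  In every case the new value must lie in the cyclic-order window determined by the already-placed points.

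The crucial verification — and the main obstacle — will be showing that when the equivariance constraint forces $h(a_{n+1})$, the forced value automatically lies in the correct cyclic-order window inside $J_{i_0(a_{n+1})}$.  Here topological exactness in the sense of Definition~\ref{defn:top_exact} does the essential work: its dichotomy of eventual coincidence versus eventual out-of-order mapping guarantees that for any two distinct orbits the cyclic relationship among their iterates develops enough resolution to distinguish them through the $\si_3$-coding.  By enumerating $A$ with care (interleaving each new point with enough of its backward orbit so that the free-placement case provides flexibility to preempt later conflicts), the forced placements can be kept consistent with cyclic order at every stage.  The ``countable, fixed-point-free'' hypothesis on $A$ rules out degenerate orbit tails and ensures that the forced values land at previously unused points of $\uc$, so that the limit map $h$ is an injection and conjugates $f$ to $\si_3|_{A'}$.
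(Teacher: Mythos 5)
The paper cites this result from \cite{bo04} without reproving it, so there is no in-paper proof to compare against directly; I will evaluate your argument on its own terms.

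Your plan --- construct an order-preserving bijection $h:A\to A'\subset\uc$ satisfying $\si_3\circ h=h\circ f$ by a back-and-forth induction, using the partition into monotonicity arcs to constrain placements --- has the right general spirit, but it leaves a genuine gap exactly where you flag ``the main obstacle,'' and the remarks you offer there do not close it. The difficulty is the following. When the forward orbit of $a_{n+1}$ first meets $F_n$ at $f^k(a_{n+1})=b$, you correctly observe that equivariance together with the itinerary constraints picks out a unique candidate $\si_3^{-k}$-preimage of $h(b)$. But that candidate must also be compatible with the cyclic order of $F_n\cup\{a_{n+1}\}$ \emph{relative to points of $F_n$ that do not lie in the grand orbit of $a_{n+1}$}. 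Nothing in the itinerary-plus-equivariance data at stage $n$ addresses those points; whether the forced candidate falls in the arc between the combinatorial neighbors of $a_{n+1}$ in $F_n$ depends on choices made at earlier stages. To make a back-and-forth work you need a finitary invariant on partial embeddings (something along the lines of: every partial placement can be extended consistently to all of $A$) that you carry through the induction, and you need to prove that topological exactness implies this invariant. Asserting that topological exactness ``guarantees enough resolution'' and that a careful interleaving of backward orbits will ``preempt later conflicts'' is a heuristic, not an argument; preemption in particular cannot work in general, because a forced placement is determined by the orbit of $a_{n+1}$ alone, while the constraints it must satisfy come from all of $F_n$.

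There is a secondary misreading of the hypotheses. You invoke the ``countable, fixed-point-free'' assumption to ``rule out degenerate orbit tails'' and to force new placements onto previously unused points of $\uc$, but neither assertion is substantiated and neither is the role these hypotheses actually play. The issue with fixed points is more delicate: with a fixed point the itinerary/exactness dichotomy degenerates (one has to compare a point to its own fixed image, and the relevant arcs never ``open up''), and the ternary realization of a constant itinerary is one of the two $\si_3$-fixed points, where injectivity and order-preservation arguments behave differently. Countability is used so that the image set $A'$ can be built one point at a time at all, not to avoid collisions; collisions are instead ruled out by showing that topological exactness forces distinct points of $A$ to have distinct realizations, which again you do not prove. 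In short: the architecture of your argument is reasonable, but the theorem's content lives precisely in the verification you defer, and as written the proposal does not supply it.
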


\begin{lem}\label{lem:sigma_extendable}
  The map $g_\T$ is $\si$-extendable of degree $3$.
\end{lem}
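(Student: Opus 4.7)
The plan is to verify the hypotheses of Theorem~\ref{extinv}: that $g_\T:A_\T\to A_\T$ is a topologically exact degree-$3$ map on a countable, fixed-point-free set. Countability of $A_\T=\bigcup_{n=1}^\infty \T_n$ is immediate as a countable union of three-element sets. Absence of fixed points follows from the wandering property preserved throughout the construction: condition~(4) of Definition~\ref{finwtdef} holds at every finite stage and therefore in the limit lamination $(\La_\infty,g_\infty)$, so the triples $\T_n$ are pairwise disjoint; combined with $g_\infty(\T_n)=\T_{n+1}$, this forces $g_\T(x)\neq x$ for every $x\in A_\T$.

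For the degree-$3$ property I would use that each $(\La_i,g_i)$ is a degree-$3$ critical lamination, so on every component of $\uc\setminus(\bLa_i\cup\{\mc,\md\})$ the extension of $g_i$ is strictly monotonically increasing; this property passes to $g_\infty$. Choose any point $p\in\uc\setminus A_\T$ (possible since $A_\T$ is countable) together with its three ``preimage locations'' $x_0<x_1<x_2$ in $\uc\setminus A_\T$, one in each of the three monotone regions of $\uc$ determined by the critical leaves $\mc,\md$; on each arc $[x_i,x_{i+1})\cap A_\T$ the map $g_\T$ is strictly monotonically increasing, giving the required degree-$3$ structure.

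The crux is topological exactness. Fix distinct $x,y\in A_\T$; by wandering and pairwise disjointness of triples, $g_\T^n(x)\neq g_\T^n(y)$ for every $n$, so it suffices to find some $n$ with $g_\T\bigl([g_\T^n(x),g_\T^n(y)]_{A_\T}\bigr)\not\subset[g_\T^{n+1}(x),g_\T^{n+1}(y)]_{A_\T}$. My approach is to exploit expansion of any degree-$3$ extension $G$ of $g_\infty$ to $\uc$: by the standard topological expansion principle for degree-$d$ covers of the circle, any non-degenerate oriented arc $I\subset\uc$ satisfies $G^n(I)=\uc$ for some $n$. Applied to $I=[x,y]_\uc$ this yields a least index $m$ at which $G\bigl([g_\T^{m-1}(x),g_\T^{m-1}(y)]_\uc\bigr)$ properly contains $[g_\T^m(x),g_\T^m(y)]_\uc$. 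The remaining, and technically most delicate, step is to produce a witness $z\in[g_\T^{m-1}(x),g_\T^{m-1}(y)]_{A_\T}$ whose $g_\T$-image falls into the excess region $\uc\setminus[g_\T^m(x),g_\T^m(y)]_\uc$. Such a $z$ exists because the construction guarantees that the forward orbit of $\T_1$ is dense in the lamination: the fullness of the pullback itineraries $(\tau_n^{\mc}),(\tau_n^{\md})$ and the proliferation of closest approaches with $p_{n+1}-p_n\to\infty$ together ensure that $A_\T$ intersects every non-degenerate arc of $\uc$. The main obstacle is to sharpen this density into a quantitative statement placing a point of $A_\T$ in the precise excess region at step $m-1$, rather than merely guaranteeing density of $A_\T$ in $\uc$.
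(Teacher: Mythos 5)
Your reduction to Theorem~\ref{extinv} is the right strategy, and countability and fixed-point-freeness of $A_\T$ are handled correctly. But the core of the lemma, topological exactness of $g_\T$, is not established, for two reasons. First, the ``standard topological expansion principle for degree-$d$ covers of the circle'' is false: a degree-$d$ covering map of $\uc$ with $d\ge 2$ can have an attracting fixed point, in which case small arcs near it never cover $\uc$ under iteration, and nothing in the definition of an admissible extension forbids this. It is also not automatic that $g_\infty$ has any continuous degree-$3$ extension to $\uc$, and since the construction fixes the embedding of $A_\T$ only up to circular order, $A_\T$ need not be metrically dense in $\uc$ either, so the appeal to ``density of $A_\T$'' has no force. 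Second, even if some extension $G$ did eventually blow up every arc, you would still need a point $z\in A_\T$ landing in the excess region at the critical step, and you explicitly concede you cannot produce it; that step is exactly where the combinatorial content of the construction must be used.

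The paper's proof takes a different, purely combinatorial route that never needs to extend $g_\infty$ to the circle. It analyzes $[x,y]_{A_\T}$ directly in terms of the $\mc$- and $\md$-lamination structure of the finite stages $\La_i$. If $[x,y]_{A_\T}$ contains an edge of some triple, the empty/short/long edge bookkeeping from Definition~\ref{def:c-moment} and Lemma~\ref{lem:extend_lam} shows this edge eventually maps to a long edge of a last triple $\T^l(\La_i)$, whose underlying arc separates the rest of $\La_i$ and therefore contains both critical leaves, so it maps out of order in one more step. If $x$ and $y$ lie in different triples $\T_{m_1},\T_{m_2}$, the leaf-like approach of $\T_{m_2}$ to a last critical value forces some image of $[x,y]$ to absorb a critical endpoint, then a critical value, whence the arc contains infinitely many triples and the first case applies. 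Your sketch engages neither mechanism, so the gap you flag is a real one and the proposal does not close it.
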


\begin{proof}
  Let $x,y \in A_\T$ be distinct points.  We consider two main cases:
  either $[x,y]_{A_\T}$ contains the edge of a triple or
  $[x,y]_{A_\T}$ contains vertices from two distinct triples.

  First, suppose that $[x,y]_{A_\T}$ contains the edge $\{a,b\}$ of a
  triple $\T_k \in \La_i$.  The idea of the proof in this case is based upon
  the fact that, by construction, every edge of a triangle eventually becomes a long edge, at which point the interval underneath
  which will map out of order. To make this more precise, assume that
  $\T_k \in \La_i$ where $i$ is minimal and $[a,b] \subset
  [x,y]$.  Then $\T_k$ maps to $\T_n = \T^l(\La_i)$; suppose that
  $[x,y]_{A_\T}$ has not mapped out of order by this time.  Let $a' =
  (g')^{n-k}(a)$ and $b' = (g')^{n-k}(b)$. If $[a',b']_{A_\T}$ contains a
  critical leaf, then $[x,y]_{A_\T}$ maps out of order in $n-k+1$ steps.
  Otherwise, we have three cases.
  \begin{enumerate}
  \item Suppose $\{a',b'\}$ is the empty edge of $\T_n$, i.e., $[a',b']$
    contains no classes of $\La_i$, and the other point of $\T_n$ is
    not in $[a',b']$.  Considering $\T_n$ in $\La_{i+1}$, we see by
    Lemma~\ref{lem:extend_lam} that $\{a',b'\}$ maps to the long edge $\{a'',
    b''\}$ of $\T^l(\La_{i+1})$.  Since order is preserved, $[a'',
    b'']$ does not contain the other point of $\T^l(\La_{i+1})$, so
    $[a'', b'']$ contains both critical leaves.  Since $[a'', b'']_{A_\T}$ maps
    out of order in one step, $[x,y]_{A_\T}$ eventually maps out of order.
  \item Suppose $\{a',b'\}$ is the short edge of $\T_n$.  Then, by
    Lemma~\ref{lem:extend_lam} $\{a',b'\}$ maps to the empty edge of
    $\T^l(\La_{i+1})$.  By the previous case, we see that $[x,y]_{A_\T}$
    eventually maps out of order.
  \item Suppose that $\{a',b'\}$ is the long edge of $\T_n$.  Since
    $[a',b']$ does not contain the endpoints of a critical leaf,
    $[a',b']$ must contain the empty edge of $\T_n$.  We have
    therefore already shown that $[a', b']_A$, and therefore
    $[x,y]_{A_\T}$, eventually maps out of order.
  \end{enumerate}

  Suppose now that $x$ and $y$ lie in different triangles $\T_{m_1}$
  and $\T_{m_2}$ with $m_1 < m_2$. In this case the idea of the proof is based upon
  the fact that the arc $[x,y]$ will eventually cover an endpoint of a critical leaf,
  then its last critical value which, by construction,
   implies that it will cover an edge of a triangle and by the
  previous case will map out of order. The formal argument follows.
  Assume by way of contradiction
  that $[x,y]_{A_\T}$ never maps out of order. Let $\La_i$ be the first
  lamination in which $\T_{m_2}$ appears. Then $\T_{m_2}$ eventually
  maps to the last triple $\T_{m_2+k}$ of $\La_i$ which is leaf-like
  adjacent to a last image of a critical leaf (say $\mc'$). Then,
  since $\La_i$ is a $\mc$-lamination, there is a, say, $n$-th
  pullback of $\md$ which is leaf-like adjacent to the long (in the
  sense of $\La_i$) edge of $\T_{m_2+k}$.  This pullback separates
  $\T_{m_2+k}$ from all triples in $\La_i$, including $\T_{m_1+k}$, so
  the interval $[g^k(x),g^k(y)]$ contains one of its endpoints. By the
  assumption we get that $[g^{k+n+1}(x), g^{k+n+1}(y)]$ contains the
  critical value $g(\mc)$ in its interior. By construction, we see
  that $[g^{k+n+1}(x), g^{k+n+1}(y)]$ contains infinitely many
  triangles and hence by the previous paragraph a higher power of $g$
  will map $[x, y]_{A_\T}$ out of order.

  Since each interval between points of $A_\T$ eventually maps out of
  order, $g$ is topologically exact.  This proves that $g$ is
  $\si$-extendable by Theorem~\ref{extinv}.
\end{proof}

Hence, $g_\T$ is conjugate to a restriction $\si_3|_A$ via circular
order isomorphism $h:A_\T \to A$ for some $A \subset \uc$.  The map
$h$ is not unique; from now on we fix it. Set $\hat \T_1 =
h(\T_1)$. Let us study properties of the $\si_3$-orbit of $\hat \T_1$
and its limit leaves.

\begin{lem}\label{combinat}
  There are disjoint and unlinked critical chords $\hat \mc$ and $\hat
  \md$ such that
  \[\si_3^{k_{2n}}(\hat \T_1) \to \hat \mc \text{ and }
  \si_3^{k_{2n+1}}(\hat \T_1) \to \hat \md.\]
\end{lem}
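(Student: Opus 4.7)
The plan is to use Hausdorff compactness and the nesting built into the construction to extract subsequential limits and identify them as critical chords.

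Pass to a subsequence along which $\hat\T_{k_{2n}+1}$ converges in the Hausdorff metric on closed subsets of $\uc$ to a set $L$ of cardinality at most three. The aim is to show $L$ is a $2$-point critical chord of $\si_3$.

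The crucial observation is that $\T_{k_{2n}+1}=g(\T_{k_{2n}})$, so its short edge (adjacent to the bud $g(\mc)$) is the $g$-image of the edge of $\T_{k_{2n}}$ that is adjacent to the critical leaf $\mc$. By Lemma~\ref{lem:extend_lam}, each successive $\mc$-lamination $\La_{2m}$ ($m>n$) nests a new triple $\T_{k_{2m}}$ strictly inside the empty arc of $\T_{k_{2n}}$ adjacent to $\mc$. Via the order-preserving embedding $h$, the $\mc$-adjacent edges of $\hat\T_{k_{2n}}$ shrink down to a limit chord $\{\hat c_1,\hat c_2\}\subset\uc$; since these two endpoints are critical combinatorially (their $g$-images coincide), continuity of $\si_3$ yields $\si_3(\hat c_1)=\si_3(\hat c_2)=:\alpha$. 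Consequently the short edge of $\hat\T_{k_{2n}+1}$ Hausdorff-collapses to the single point $\alpha$.

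After extracting a further subsequence, the third vertex $w_n$ of $\hat\T_{k_{2n}+1}$ converges to some $w^*\in\uc$, and it remains to identify $w^*$. This is done by tracking the forward chain $\hat\T_{k_{2n}+1},\dots,\hat\T_{p_{2n}}$ along which $\si_3$ transports the successive short edges through the orbit of $\alpha$ while carrying the third vertex by $w_n\mapsto\si_3(w_n)$. The long-edge adjacency of $\hat\T_{p_{2n}}$ to the pullback $\mell(\La_{2n})$, combined with the topological exactness of $g_\T$ from Lemma~\ref{lem:sigma_extendable}, pins $w^*$ to a $\si_3$-preimage of $\si_3(\alpha)$ distinct from $\alpha$. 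Hence $\hat\mc:=\{\alpha,w^*\}$ is a genuine critical chord of $\si_3$, and independence of the limit from the chosen subsequence then follows from the uniqueness of both the nesting and of the chain analysis.

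The identical analysis with $\md$ in place of $\mc$ produces $\hat\md$, and disjointness and unlinked-ness of $\hat\mc$ and $\hat\md$ descend from the disjoint-unlinked structure of $\mc,\md$ as classes of $\La_\infty$ (Definition~\ref{finwtdef}(2),(4)), preserved by the order-preserving embedding $h$. The main obstacle will be pinning down $w^*$: this is the step requiring the most care, and is where the chain structure together with topological exactness does the real work.
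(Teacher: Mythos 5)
The proposal shares the paper's framework (compactness to extract a limit, use of the order-isomorphism $h$, nesting from the construction), but the key step—criticality of the limit chord—is not actually justified, and the stated justification is wrong. You assert that the $\mc$-adjacent edges of $\hat\T_{k_{2n}}$ converge to a chord $\{\hat c_1,\hat c_2\}$ whose endpoints ``are critical combinatorially (their $g$-images coincide).'' But the edges of the triangles $\T_{k_{2n}}$ are \emph{not} critical leaves—they are ordinary edges of triangles that happen to be leaf-like adjacent to $\mc$. Their $g$-images do not coincide at any finite stage. Moreover, $\mc$ itself is not a point of $A_\T=\bigcup\T_n$, so the embedding $h$ is simply not defined on $\mc$; one cannot transport the combinatorial criticality of $\mc$ directly to the limit. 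Criticality of the limit chord $\hat\mc$ is precisely what must be proven, and cannot be read off from the combinatorial model for free.

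The paper establishes criticality by a contradiction argument that uses quantitative information from the construction: if $\hat\mc$ were not critical, then $\si_3(\hat\mc)$ would be a nondegenerate chord, a nondegenerate interval $I$ would lie under it, and there would be a minimal $N$ with $\si_3^N(I)\supset\T_1$. But the construction guarantees that $p_{n_i}-k_{n_i+1}\to\infty$, so one can pick $i$ with $p_{n_i}-k_{n_i+1}>N$; for such $i$ the short edge of $\T_{k_{n_i}}$ remains short (then empty) for more than $N$ iterations, so the interval under it cannot cover $\T_1$ within $N$ steps—a contradiction. Your proposal contains no analogue of this argument. The separate tracking of the third vertex $w_n\to w^*$ and the appeal to topological exactness to ``pin down'' $w^*$ is also unnecessary once one argues as in the paper: pairwise-disjoint convex sets in $\ol{\disk}$ have Hausdorff limits that are chords or points, so once the limit is shown to be a nondegenerate critical chord, the three vertices automatically land on its two endpoints without any case analysis. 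Finally, the uniqueness/independence-of-subsequence claim at the end is glossed: the paper addresses the possibility of up to four distinct limit chords (one on each side of $\mc$ and of $\md$) and rules out extras because $\si_3(\hat\mc)\ne\si_3(\hat\md)$ leaves no room for another critical chord unlinked with both; your argument that uniqueness ``follows from the uniqueness of both the nesting and of the chain analysis'' does not substitute for this.
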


\begin{proof}
  We will first find $\hat \mc$. Let $(k_{n_i})_{i=1}^\infty$ denote
  a subsequence such that $\{\hat \T_{k_{n_i}}\,\mid\,i \ge 0\}$ are
  on the same side of $\mc$ in $\La_\infty$. Since $h$ preserves
  order on $A_\T$, it follows that $\si_3^{k_{n_i}}(\hat \T_1)$
  converges to a chord $\hat \mc$. Indeed, since $\si_3^m(\hat
  \T_1)$ are pairwise disjoint convex sets, a subsequence of such
  sets can only converge to a chord or a point in the circle.
  However, by construction on either side of $\mc$ in the
  combinatorial model there are images of $\T_1$. Hence the limit of
  $\si_3^{k_{n_i}}(\hat \T_1)$ must be a chord $\hat \mc$. In order
  to see that $\hat\mc$ is critical we will use that the edges of
  triangles under which $\si_3(\hat\mc)$ is located require more and
  more time to cover $\T_1$ and, hence, $\si_3(\hat\mc)$ must be a
  point as desired. Now we will implement this idea.

   Suppose for contradiction that $\hat \mc$ is not
  critical. Then a non-degenerate interval $I$ lies under $\si_3(\hat \mc)$,
  so there is a minimal $N > 0$ such that $\si_3^N(I)$ contains
  $\T_1$.  This implies that one of the first $N$ images of the short
  edge of $\T_{k_{n_i}}$ is neither short nor empty in $\La_{n_i+1}$,
  with the interval underneath either mapping out of order or
  containing $\T_1$.  This contradicts the details of the
  construction; recall that $p_{n_i}-k_{n_i+1} \to \infty$ so we can choose $i$ so that
  $p_{n_i}-k_{n_i+1}>N$.  Then
  the short edge of $\T_{k_{n_i}}$ is short on the segment of its
  orbit from $k_{n_i}+1$ to $p_{n_i}$ in $\La_{n_i}$ and is empty on
  the segment from $p_{n_i}+1$ to $k_{n_i+1}$ in the lamination
  $\La_{n_i+1}$, and therefore does not contain $\T_1$.  Therefore,
  $\hat \mc$ is a critical leaf.

  Let $\hat \md$ be a critical chord obtained similarly from
  $\T_{k_{2n+1}}$.  In principle, it is possible that four chords
  leaves can arise this way: two for each side of $\mc$ and $\md$.
  However, since $g_\infty(\mc) \neq g_\infty(\md)$ and $h$ preserves
  order, it follows that $\si_3(\hat \mc) \neq \si_3(\hat \md)$.
  There is no room for another critical chord unlinked with $\hat \mc$ and
  $\hat \md$, so $\T_{k_{2n}} \to \hat \mc$ and $\T_{k_{2n+1}} \to \hat
  \md$.
\end{proof}

Let $\hat \Ta$ denote the critical portrait $\{\hat \mc, \hat \md\}$.

\begin{lem}\label{lem:aperiodic}
  The critical portrait $\hat \Ta$ has aperiodic kneading.
\end{lem}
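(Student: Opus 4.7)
The plan is to argue by contradiction. Suppose some $\itin^{\pm}(\theta)$ is (eventually) periodic; by symmetry assume $\itin^+(\theta)$ is periodic with pre-period $q$ and period $p$, where $\theta$ is an endpoint of $\hat\mc$. The idea is to show that this forces the $\si_3$-orbit of $\theta$ to be finite, and then contradict this using the combinatorial structure of $\La_\infty$, which guarantees that this orbit visits infinitely many pairwise distinct buds.

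For the finiteness, set $z=\si_3^q(\theta)$, so that $\itin^+(z)$ is purely periodic of period $p$ and in particular $\itin^+(\si_3^p(z))=\itin^+(z)$. Since the $\hat\Ta$-unlinked classes form a Markov partition for the expanding map $\si_3$, each level-$k$ one-sided cylinder $\{y\in\uc : \itin^+(y) \text{ agrees with } \itin^+(z) \text{ on the first } k \text{ entries}\}$ decomposes into pieces of length at most $3^{-k}$, so the one-sided itinerary map $\itin^+:\uc\to\Sigma^+$ is injective. Hence $\si_3^p(z)=z$, which makes $\theta$ preperiodic under $\si_3$; in particular its forward orbit is finite.

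For the contradiction, by Lemma~\ref{combinat} together with the order-preserving conjugacy $h$ from Lemma~\ref{lem:sigma_extendable}, the critical chord $\hat\mc$ corresponds to the leaf $\mc$ of $\La_\infty$ (both arise as limits of the same sequence $\hat\T_{k_{2n}}$, respectively $\T_{k_{2n}}$). Accordingly $\theta$ corresponds to an endpoint of $\mc$, and for each $j\ge 1$ the point $\si_3^j(\theta)$ corresponds to the bud $g_\infty^j(\mc)=\{x_j\}$: since $\hat\mc$ is critical, the triples $\hat\T_{k_{2n}+j}$ collapse to the single point $\si_3^j(\theta)$ as $n\to\infty$, while the combinatorial triples $\T_{k_{2n}+j}$ concentrate at $\{x_j\}$, and order-preservation of $h$ identifies them. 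By Definition~\ref{finwtdef}(3)--(4) the buds $\{x_j\}$ for $j\ge 1$ are pairwise distinct classes of $\La_\infty$ with pairwise disjoint convex hulls, which yields pairwise distinct points $\si_3^j(\theta)$ in $\uc$. Hence the forward orbit of $\theta$ under $\si_3$ is infinite, contradicting the finiteness established above.

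The main obstacle is rigorously establishing the two correspondences: injectivity of $\itin^+$ at the boundary point $z$ (which may itself lie on the preimage structure of the critical leaves), and identification of the combinatorial buds $\{x_j\}$ with the orbit points $\si_3^j(\theta)$ via an order-preserving extension of $h$ beyond $A_\T$. Both should follow from standard arguments about expansion of $\si_3$ and the shrinking of Markov cylinders, combined with the Hausdorff convergence of the approximating triples supplied by Lemma~\ref{combinat}.
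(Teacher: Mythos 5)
The paper's proof is short and direct: by Lemma~\ref{lem:extend_lam} and the construction, the itinerary of $\si_3(\hat\mc)$ is built from longer and longer segments that shadow the pullback leaves $\ell(\La_i)$, and as $n$ varies these segments eventually contain each $\tau^\mc_i$ from the full sequence. Since a full sequence realizes every finite precritical itinerary and these have unbounded length, no periodic sequence can contain them all as subwords, so $\itin^\pm$ of the endpoints of $\hat\mc$ (and symmetrically $\hat\md$) is not periodic. That is the whole argument.

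Your attempt takes a different route: reduce periodicity of the itinerary to $\si_3$-preperiodicity of $\theta$ via injectivity of $\itin^+$, then derive a contradiction by showing that $\si_3^j(\theta)$ for $j\ge 1$ are pairwise distinct because the combinatorial buds $g_\infty^j(\mc)$ are pairwise distinct in $\La_\infty$. The first half is reasonable, but the second half has a real gap that you yourself flag. The order isomorphism $h$ from Lemma~\ref{lem:sigma_extendable} is defined only on $A_\T$ (the orbit of the triple), not on the buds $g_\infty^j(\mc)$, and there is no a priori extension to a circular order isomorphism of the larger set $A_\T\cup\{g_\infty^j(\mc)\}$ that identifies each bud with the corresponding point $\si_3^j(\theta)$. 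Lemma~\ref{combinat} identifies $\hat\mc$ and $\hat\md$ as limits of images of $\hat\T_1$, but it says nothing about where the \emph{forward images} of $\hat\mc$ land relative to the realized triangle orbit; you would need a separate argument (for instance, that for each $j$ the bud $g_\infty^j(\mc)$ is combinatorially separated from the rest of the orbit by triangles whose realizations converge to $\si_3^j(\theta)$ from both sides, and that two distinct buds have separating triangles so cannot realize to the same point). In addition, Definition~\ref{finwtdef}(3)--(4) does not directly say the buds $g_\infty^j(\mc)$ are pairwise distinct for distinct $j$; that requires the observation, drawn from the growth of the monotone sequence of intervals in the construction, that the $g_\infty$-orbit of $\mc$ never cycles. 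None of these points are fatal conceptually, but they are exactly the parts left unproved, and the paper's approach sidesteps them entirely by arguing at the level of itineraries rather than orbit points. I would recommend abandoning the preperiodicity route and instead noting, as the paper does, that by construction the initial segments of $\itin(\si_3(\hat\mc))$ shadow the triangle's itinerary between consecutive closest approaches and therefore contain every $\tau^\mc_i$, which a periodic sequence cannot do.
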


\begin{proof}
  By Lemma \ref{lem:extend_lam} for each $n$ the first segment of
  length $p_{2n+1}-p_{2n}$ of the itinerary of $\si_3(\hat \mc)$
  equals the itinerary of $\T_1$ from $\T_{p_{2n}}$ to $\T_{p_{2n+1}}$
  as given by $\La_{2n+1}$. By construction, as we vary $n$, the
  initial segments of the itinerary of $\si_3(\hat \mc)$ will have to
  contain all itineraries $\tau^\mc_i$, so the itinerary of
  $\hat \mc$ cannot be periodic.
\end{proof}

According to Theorem~\ref{kiwistruct}, there is a unique $\hat
\Ta$-compatible lamination $\sim_{\hat \Ta}=\sim$.  The quotient
$J_\sim = \uc / {\sim}$ is a dendrite, with corresponding quotient map
$p:\uc \to J_\sim$.  Denote by $\La_\sim$ the collection of (maybe
degenerate) boundary chords of convex hulls of all $\sim$-classes.
As in \cite{thu85} the set $\La_\sim$ can be interpreted
geometrically. For each $\hat\Ta$-itinerary $\tau$ there are unique
preimages of $\hat \mc$ and $\hat \md$ which are pullbacks
corresponding to $\tau$.  Note that the closures of the preimages of
$\hat \mc$ and $\hat \md$ generate a $\hat\Ta$-compatible
lamination, which equals $\La_\sim$ by the uniqueness of the $\hat
\Ta$-compatible lamination (Theorem~\ref{kiwistruct}); it is
therefore not difficult to see that preimages of $\hat \mc$ and
$\hat \md$ are dense in $\La_\sim$.

\begin{lem}\label{lem:wander}
  The triangle $\hat \T_1$ is a $\sim$-class which is a wandering triangle, $\hat
  \mc$ and $\hat \md$ are the critical $\sim$-classes, and the forward
  orbit of $p(\hat \T_1)$ is condense in $J_\sim$.
\end{lem}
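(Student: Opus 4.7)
The plan has three parts matching the three assertions of the lemma, with the identification of the critical classes interleaved with the argument that $\hat\T_1$ is itself a $\sim$-class. First, I extend the order isomorphism $h$ from $A_\T$ to all of $\bigcup \La_\infty$, sending $\mc, \md$ to $\hat\mc, \hat\md$ and the $\C$-unlinked classes of $\La_\infty$ to the $\hat\Ta$-unlinked classes in $\uc$. Since $\T_1$ is a class of $\La_\infty$ unlinked with $\mc, \md$, its three vertices lie in a common $\C$-unlinked class; via $h$, the three vertices of $\hat\T_1$ lie in a common $\hat\Ta$-unlinked class, and the same holds for each $\hat\T_n$. Because the combinatorial triangles $\T_n$ are disjoint from $\mc, \md$, no vertex of $\hat\T_n$ is an endpoint of $\hat\mc$ or $\hat\md$; hence each vertex $v$ of $\hat\T_1$ satisfies $\itin^+(v) = \itin^-(v) = \itin(v)$, and these itineraries coincide across the three vertices. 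By Definition~\ref{lamtheta} the vertices lie in a single $\sim$-class $\mg \supseteq \hat\T_1$.

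Next, I identify the critical classes. The $\hat\Ta$-compatibility of $\sim = \sim_{\hat\Ta}$ places $\hat\mc, \hat\md$ inside critical $\sim$-classes. The degree-$3$ constraint $\sum_{\mg'\text{ critical}}(|\mg'| - |\si_3(\mg')|) = d - 1 = 2$, combined with the contribution $1+1=2$ already from $\hat\mc$ and $\hat\md$, forces these chords to be exactly the critical $\sim$-classes. This lets me finish the first claim: $\mg$ is finite by Theorem~\ref{kiwistruct}; non-preperiodic, since an iterate $\si_3^i(\mg) = \si_3^j(\mg)$ with $i<j$ would force $\si_3^i(\mg)$ to contain the pairwise disjoint triangles $\hat\T_{j+1+n(j-i)}$ for all $n \geq 0$, violating finiteness; and non-precritical, since every iterate of $\mg$ contains a triangle $\hat\T_n$ with three points while critical $\sim$-classes have only two. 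Hence $\mg$ is a wandering non-precritical gap in the cubic lamination $\sim$, so Theorem B of \cite{blolev99} recalled in the introduction gives $|\mg| \le 3$, forcing $\mg = \hat\T_1$.

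For condensity, the key ingredient is the fullness of the sequences $(\tau_n^\mc), (\tau_n^\md)$: by Lemma~\ref{lem:extend_lam}, at appropriate steps of the construction some triple $\T_k$ is adjacent in $\La_\infty$ to every prescribed pullback leaf of $\mc$ or $\md$. Via $h$, the orbit $\{\hat\T_n\}$ has subsequences converging in Hausdorff distance to every preimage of $\hat\mc, \hat\md$ in $\La_\sim$, and since these preimages are dense in $\La_\sim$, the orbit $\{p(\hat\T_n)\}$ accumulates at a dense set of points in $J_\sim$. Given a non-degenerate arc $\alpha \subset J_\sim$, I would locate a pullback $\hat\ell$ whose projection $p(\hat\ell)$ lies in the interior of $\alpha$ and whose combinatorial side of approach---determined by the construction's choice of $\mc'$- or $\md'$-adjacency---corresponds to a branch of $J_\sim$ contained in $\alpha$; the fullness of the end-critical itineraries makes such a choice possible. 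The subsequence of $p(\hat\T_n)$ approaching $p(\hat\ell)$ from the chosen side then enters $\alpha$ for all sufficiently large $n$, yielding condensity.

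The main obstacle is this last step. The $\hat\T_n$ do not themselves shrink but only Hausdorff-converge to chords, so their projections accumulate only at branch points rather than at arbitrary points of $J_\sim$; branch points in a dendrite admit multiple directions of approach, and the argument hinges on exploiting the construction's flexibility to ensure the orbit approaches each required branch point from the correct side to enter the given arc.
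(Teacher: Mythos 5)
Your first two paragraphs are in the right spirit and overlap substantially with the paper's argument (which also places the three vertices of $\hat\T_1$ in one $\sim$-class via Theorem~\ref{kiwistruct}/Definition~\ref{lamtheta} and then invokes the bound from \cite{blolev99} to force $\mg=\hat\T_1$), but two steps deserve scrutiny. First, the ``degree-$3$ constraint'' $\sum(|\mg'|-|\si_3(\mg')|)=d-1$ is not the standard criticality count; the usual Riemann--Hurwitz-type formula sums $\deg(\si_3|_{\mg'})-1$ over critical classes, and $|\mg'|-|\si_3(\mg')|=(\deg-1)\cdot|\si_3(\mg')|$, which agrees with $\deg-1$ only when the image is a single point. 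As stated, the count does not by itself exclude a critical class properly containing $\hat\mc$ with degree $2$ onto a leaf. The paper sidesteps this by first establishing that $\sim$ is a WT-lamination and then citing \cite[Theorem~A]{blolev99}, which says precisely that a cubic WT-lamination has two disjoint critical \emph{leaves}; this is the cleaner route, and your own later argument that $\mg$ is wandering already gives you what you need to invoke it. Second, extending $h$ to all of $\bigcup\La_\infty$ (in particular sending $\mc,\md$ to $\hat\mc,\hat\md$) is not automatic: $h$ is given only on $A_\T$, and the assertion that it extends to the preimage leaves in an order-preserving way is essentially equivalent to the density statement you are trying to prove, so there is a risk of circularity. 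The paper avoids the issue by arguing directly with the triangles and their limits.

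The ``main obstacle'' you identify in the condensity step is, in fact, not an obstacle. Once you know $\hat\T_n$ Hausdorff-converges to a leaf $\hat\ell$ (a preimage of $\hat\mc$ or $\hat\md$), continuity of the quotient map $p$ gives $p(\hat\T_n)\to p(\hat\ell)$ without any further information; each $\hat\T_n$ projects to a single point of $J_\sim$, and the direction of approach is irrelevant. Given any non-degenerate subcontinuum $B\subset J_\sim$, it contains an open arc $\alpha$, the density of critical pullbacks in $\La_\sim$ produces a leaf $\hat\ell$ with $p(\hat\ell)\in\alpha$, and since $\alpha$ is open the sequence $p(\hat\T_n)$ eventually enters $\alpha$. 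So there is no need to control the ``side'' of approach: the paper's proof does exactly this, combining Lemma~\ref{combinat}-type convergence of triangles to critical pullbacks with the density of those pullbacks in $\La_\sim$. The genuine work in the paper's condensity argument is showing, via the full sequences of itineraries and Lemma~\ref{lem:extend_lam}, that for \emph{each} precritical itinerary $\tau$ one can extract a subsequence of $\hat\T_n$ converging to $\hat\ell_\tau$; your sketch gestures at this but stops short, and that, rather than the side-of-approach issue, is where the remaining detail lies.
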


\begin{proof}
  By Theorem~\ref{kiwistruct}, $\hat \T_1$ is contained in a finite
  $\sim$-class $\mathbf w$. Since $\hat \T_1$ is wandering, so is
  $\mathbf w$.
  By \cite[Theorem A]{blolev99} (see also \cite{kiwigap}), it follows that $\hat
  \T_1=\mathbf w$ and the critical $\sim$-classes are $\hat \mc$ and $\hat \md$.

  That the forward orbit of $p(\hat \T_1)$ is condense in
  $J_\sim$ is equivalent to the forward orbit of $\hat
  \T_1$ being dense in $\La_\sim$ in the sense that every leaf in $\La$
  can be approximated arbitrary well by an edge of some $\hat \T_N$. As discussed above,
  it suffices to show that every
  critical pullback is approximated by the orbit of the triangle $\hat
  \T_1$ arbitrarily well. Choose a precritical itinerary $\tau$ of
  length $N$ with the last entry $\hat \mc$ and let $\hat \ell_\tau$
  be the pullback leaf of $\hat \mc$ in $\La_\sim$ with itinerary
  $\tau$. Since $\hat \mc$ is a $\sim$-class, so is $\hat \ell_\tau$.

By construction, for every pullback $\tilde\mc$ of $\mc$ in the
combinatorial lamination there is a triangle ``close'' to it which
follows the itinerary of $\tilde\mc$ up to the point that
$\tilde\mc$ maps to $\mc$. This will imply that in the
$\si_3$-implementation of this combinatorial lamination the
corresponding triangle can be chosen arbitrary close to the
corresponding critical pullback of $\hat \mc$. The formal proof of
this fact is given below.

  Choose a sequence of precritical itineraries $\tau^\mc_{u_i}$, whose
  lengths approach infinity, all of which in the end have a segment
  coinciding with $\tau$. By construction and
  Lemma~\ref{lem:extend_lam}, in the lamination $\La_{2u_i}$ there is
  a leaf $\ell(\La_{2u_i-1})$ which is a pullback of $\mc$ exhibiting
  itinerary $\tau^\mc_{u_i}$ right before it maps to $\mc$.  This
  implies that the appropriate image of $\ell(\La_{2u_i-1})$ is the
  leaf $\ell_\tau$ with itinerary $\tau$. Moreover, the last triangle
  $\T^l(\La_{2u_i-1})$ is adjacent to $\ell(\La_{2u_i-1})$ in
  $\La_{2u_i-1}$, maps to the triangle, denoted here $\T^i$, which is
  adjacent to $\ell^\tau$ and then to the triangle $\T_{k_{2u_i}}$
  adjacent to $\mc$ in $\La_{2u_i}$.

  Let us show that then $\hat \T^i$ converges to $\hat
  \ell_\tau$. Indeed, as in the proof of Lemma~\ref{combinat} we may
  assume that a sequence of triangles $\hat \T^i$ converges to some
  chord $\hat \ell$ from one side. Then $\si_3^N(\hat \T^i)=\hat
  \T_{k_{2u_i}}$ and by continuity $\si_3^N(\hat \ell)$ equals the
  limit of the triangles $\hat \T_{k_{2u_i}}$ which, by
  Lemma~\ref{combinat}, is $\hat \mc$. On the other hand, by
  construction the itinerary of $\hat \ell$ before that moment
  coincides with $\tau$ up to its last entry. Hence $\hat \ell=\hat
  \ell_\tau$ as desired. Since $\tau$ was an arbitrary precritical
  itinerary ending with $\mc$ and since the same arguments can be used
  if it ends with $\md$, we conclude that in fact any pullback leaf of
  a critical leaf in $\La_\sim$ is a limit leaf of the forward orbit
  of $\hat \T_1$, and so, as explained above, the forward orbit of
  $p(\hat \T_1)$ is condense in $J_\sim$.
\end{proof}

\begin{lem}\label{lem:isomorphic_copy}
  $\La$ contains an order isomorphic copy of $(\La_0, g_0)$.
\end{lem}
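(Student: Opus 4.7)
The plan is to exhibit an explicit order-preserving injection $\phi:\bigcup\La_0\to\uc$ whose image, equipped with $\si_3$ and the class structure inherited from $\La_0$, is the desired isomorphic copy of $(\La_0,g_0)$ inside $\La$. On the vertices of the triples $\T_1,\dots,\T^l$ of $\La_0$ I set $\phi=h$, the order-preserving bijection produced in Lemma~\ref{lem:sigma_extendable}. This handles the triangular part of $\La_0$: Lemma~\ref{lem:wander} guarantees that each $\hat\T_j=h(\T_j)$ is a $\sim$-class (using forward $\sim$-invariance to pass from $\hat\T_1$ to its iterates), and $h$ conjugates $g_\T$ to $\si_3$ by construction.

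Next I would extend $\phi$ to the critical leaves by sending the endpoints of $\mc$ and $\md$ to the endpoints of $\hat\mc$ and $\hat\md$, matching them using Lemma~\ref{combinat}: the endpoint of $\mc$ approached by the adjacent triples $\T_{k_{2n}}$ in each $\mc$-lamination $\La_{2n}$ is sent to the endpoint of $\hat\mc$ approached by $\hat\T_{k_{2n}}=\si_3^{k_{2n}}(\hat\T_1)$, and similarly for $\md$. Then I would propagate $\phi$ to every bud of $\La_0$ by the dynamical rule $\phi(g_0^i(\mc))=\si_3^i(\phi(\mc))$ and $\phi(g_0^i(\md))=\si_3^i(\phi(\md))$. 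Finally, for each leaf $\ell\in\La_0$ that is a pre-critical pullback along an end-critical $\La_0$-itinerary $\tau$, I set $\phi(\ell)$ to be the unique pullback of $\hat\mc$ or $\hat\md$ in $\La_\sim$ whose $\hat\Ta$-itinerary equals $\tau$; existence and uniqueness of such a pullback hold because $\hat\Ta$ has aperiodic kneading by Lemma~\ref{lem:aperiodic} and $\La_\sim$ is the unique $\hat\Ta$-compatible lamination by Theorem~\ref{kiwistruct}.

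Compatibility with the dynamics is built into the definition of $\phi$, so the substance of the argument---and the step I expect to be the main obstacle---is verifying that $\phi$ preserves circular order on the finite set $\bigcup\La_0$. My plan for that step is to argue point by point that each $p\in\bigcup\La_0$ is bracketed, in the combinatorial circle, by a specific pair of triangle vertices $u,v\in A_\T$, and that $\phi(p)$ is bracketed in $\uc$ by $h(u),h(v)$ in the same cyclic sense. The bracketing on the $\La_0$-side is supplied by the adjacency rules of Definition~\ref{def:c-moment} and the inductive continuations of Lemmas~\ref{lem:extend1_lam} and~\ref{lem:extend_lam}; the matching bracketing on the $\uc$-side comes from Lemma~\ref{combinat} for the endpoints of $\hat\mc,\hat\md$ and their forward images, and from Lemma~\ref{lem:charitin} together with itinerary-uniqueness under aperiodic kneading for the pullback leaves. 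Since $h$ preserves the cyclic order of all bracketing vertex pairs, every $\phi$-image lands in the correct arc, pairwise unlinkedness of $\La_0$-classes transfers to pairwise unlinkedness of their $\phi$-images, and $(\phi(\La_0),\si_3)$ realizes the desired order-isomorphic copy of $(\La_0,g_0)$ inside $\La$.
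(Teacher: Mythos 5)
Your proposal is correct and takes essentially the same approach as the paper, which also builds the isomorphic copy from $h(\T_1),\dots$, forward images of $\hat\mc$ and $\hat\md$, and pullback leaves corresponding to the end-critical itineraries realized in $\La_0$. The one minor difference is that the paper obtains each pullback leaf $\hat\mc^\tau$ as a limit of preimages of the approaching sequence $(\T_{k_{2n}})$, while you invoke itinerary uniqueness via Theorem~\ref{kiwistruct} and Lemma~\ref{lem:aperiodic}; the paper itself notes in the discussion preceding Lemma~\ref{lem:wander} that these two descriptions give the same collection of leaves, so the two framings are interchangeable.
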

\begin{proof}
  The preimage of the sequence $(\T_{k_{2n}})$ corresponding to the
  itinerary $\tau$ converges to a leaf $\hat \mc^\tau$, and likewise
  for $(\T_{k_{2n+1}})$ and $\md$.  The collection of all such leaves
  for every itinerary $\tau$ represented in $(\La_0, g_0)$, as well as
  some forward images of $\hat \mc$ and $\hat \md$, then forms
  an order isomorphic copy of $(\La_0, g_0)$ in $\La$.  The
  straightforward details are left to the reader.
\end{proof}

Given a full sequence of end-critical itineraries ending in $\mc$
and a full sequence of end-critical itineraries ending in $\md$, we
can find the critical portrait $\Ta$ from Theorem~\ref{thm:inv_lam}
so that a sublamination of $\sim_\Ta$ is order isomorphic to the
lamination $(\La_\infty, g_\infty)$ constructed on the basis of
these two full sequences of end-critical itineraries by repeated
application of Lemma~\ref{lem:extend_lam}. Hence, we have proven the
following theorem (recall the the notion of condensity was
introduced in Subsection~\ref{ss:condens}.

\begin{thm}\label{thm:inv_lam}
  If $(\La_0, g_0)$ is a $\mc$-lamination, then there exists a
  critical portrait $\hat \Ta$ such that the following hold.
  \begin{enumerate}
  \item $\hat \Ta$ has aperiodic kneading.
  \item $\sim_\Ta$ has a wandering class $\T_1$ consisting of three points.
  \item The orbit of $\T_1$ is condense in the quotient space $\uc /
    \sim_\Ta$.
  \item A subcollection of $\sim$-classes forms a finite dynamical
    sublamination which is conjugate to $(\La_0, g_0)$.
  \end{enumerate}
\end{thm}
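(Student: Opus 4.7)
The plan is to harvest the lemmas already established in this section and assemble them into the four claimed properties. First I would apply the inductive construction described immediately before the theorem: starting from the given $\mathbf{c}$-lamination $(\La_0,g_0)$, I pick full sequences $(\tau_n^\mc)$ and $(\tau_n^\md)$ of end-critical itineraries whose lengths grow by at most one at each step and whose first term is short relative to $N(\La_0)$, then iterate Lemma~\ref{lem:extend_lam}, alternating between $\mathbf{c}$- and $\mathbf{d}$-laminations. This yields the increasing chain $(\La_i,g_i)_{i\ge 0}$ with limit $(\La_\infty,g_\infty)$, together with the two interleaved sequences of indices $k_n<p_n<k_{n+1}$ controlling the closest approaches of the orbit of $\T_1$ to the critical leaves.

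Next I would restrict to the wandering triangle sublamination $\La_\T=\{\T_n\mid n\ge 1\}$ and invoke Lemma~\ref{lem:sigma_extendable}, which (via Theorem~\ref{extinv}) produces an order-preserving bijection $h:A_\T\to A\subset\uc$ conjugating $g_\T$ to $\si_3|_A$. Setting $\hat\T_n=h(\T_n)$, Lemma~\ref{combinat} then delivers disjoint critical chords $\hat\mc,\hat\md$ as the limits of $\si_3^{k_{2n}}(\hat\T_1)$ and $\si_3^{k_{2n+1}}(\hat\T_1)$, and I define $\hat\Ta=\{\hat\mc,\hat\md\}$. Property (1), aperiodic kneading, is then exactly the content of Lemma~\ref{lem:aperiodic}, which uses that the itinerary of $\si_3(\hat\mc)$ contains every $\tau^\mc_i$ as an initial segment of some iterate, so no period can stabilize.

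For (2) and (3), once $\hat\Ta\in\AP_3$ is in hand, Theorem~\ref{kiwistruct} provides the unique $\hat\Ta$-compatible lamination $\sim=\sim_{\hat\Ta}$ and the dendritic quotient $J_\sim$. Lemma~\ref{lem:wander} then shows simultaneously that $\hat\T_1$ is itself a $\sim$-class (using \cite{blolev99}) which is wandering and that the forward orbit of its image under the quotient map is condense in $J_\sim$; the key mechanism is that the full sequences of itineraries were engineered precisely so that every precritical pullback leaf of $\hat\mc$ or $\hat\md$ in $\La_\sim$ is approximated arbitrarily well by some image $\hat\T^i$ of $\hat\T_1$, and preimages of $\hat\mc,\hat\md$ are dense in $\La_\sim$. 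Finally, property (4) is Lemma~\ref{lem:isomorphic_copy}: the leaves obtained as limits of the pullback sequences of $(\T_{k_{2n}})$ and $(\T_{k_{2n+1}})$ along the itineraries present in $(\La_0,g_0)$, together with the appropriate forward images of $\hat\mc$ and $\hat\md$, reproduce an order-isomorphic copy of $(\La_0,g_0)$ inside $\La_\sim$.

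No new technical obstacle remains at this stage; the whole point is that the combinatorial machinery of Section~\ref{combconstr} and the realization arguments earlier in Section~\ref{constr} were set up so that the theorem becomes a one-paragraph bookkeeping assembly. The only subtle point to flag is that one must check the hypothesis on the length of $\tau^\mc_0$ (and its analog for $\md$) before invoking Lemma~\ref{lem:extend_lam}, which is why the construction begins by assuming that the maximal $\La_0$-itineraries of $\mc'$ and $\md'$ already exceed length two; this is harmless since any $\mathbf{c}$-lamination can be trivially continued once or twice before beginning the inductive step.
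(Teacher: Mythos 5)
Your proposal matches the paper's proof essentially line for line: both assemble the four claims from the inductive construction via Lemma~\ref{lem:extend_lam}, then Lemma~\ref{lem:sigma_extendable} (through Theorem~\ref{extinv}), Lemma~\ref{combinat}, Lemma~\ref{lem:aperiodic}, Theorem~\ref{kiwistruct}, Lemma~\ref{lem:wander}, and Lemma~\ref{lem:isomorphic_copy}, in the same order and for the same purposes. Your flag about the length hypothesis on $\tau_0^\mc$ corresponds to the paper's standing assumption at the start of Section~\ref{constr}; this is the same argument.
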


\subsection{Locating combinatorial laminations in
  $\A_3$}\label{ss:loca3}

In this section, we show that the collection of critical portraits
corresponding to the $\si_3$-invariant laminations given by
Theorem~\ref{thm:inv_lam} is dense in the space of all critical
portraits with the compact-unlinked topology (see Definition~\ref{cu}).
To do so, we take a critical portrait $\Ta$ and construct a
$\mc$-lamination $(\La, g)$ such that the critical portrait of any $\si_3$-invariant lamination
containing an order isomorphic copy of $(\La, g)$ is close to $\Ta$.
The critical portrait of any $\si_3$-invariant lamination given by
Theorem~\ref{thm:inv_lam} for $(\La, g)$ is then close to $\Ta$.

To do so, we make the observation that certain dynamical behaviors in
invariant laminations tell us where particular points of the circle
can be located.  For example, if there are points $a, b \in \uc$ such that
$\si_3(a) < a < b < \si_3(b)$ in counter-clockwise order on the
interval $[\si_3(a), \si_3(b)]$, we have either $0 \in (a,b)$ or $1/2
\in (a,b)$.  We can use this sort of information to pinpoint the
locations of critical portraits to high precision.  Let $A_\mc(\Ta)$
be the component of $\uc\sm \Ta$ which is an arc whose endpoints are
the endpoints of $\mc$. Define $A_\md(\Ta)$ similarly.  Given a leaf
$\ell$ contained in a connected component $H$ of a $\Ta$-unlinked
class, let $I_\ell$ be the arc in $H$ with the same endpoints.  We
often refer to $I_\ell$ as being \emph{under $\ell$}; note that this
is compatible with the previous definition of ``under a leaf'', but not ``under the edge of a triple''.

\begin{defn}[Settled lamination]\label{d:settl}
  Let $(\La, g)$ be a critical lamination  with two disjoint
  critical leaves $\Ta=\{\mc, \md\}$ such that the following
  conditions hold.
  \begin{enumerate}
  \item At least one of the following conditions hold.
    \begin{enumerate}
    \item There exists a leaf $\ell_x \in \La$ such that $I_{\ell_x}
      \subset I_{g(\ell_x)} \subset A_\mc(\Ta)$.  (This models the
      case where $I_{\ell_x}$ is a short interval containing $0$ or $1/2$.)
    \item There exists a leaf $\ell_x \in \La$ such that $I_{\ell_x}
      \subset I_{g^2(\ell_x)} \subset A_\mc(\Ta)$ and $I_{g(\ell_x)}
      \subset A_\md(\Ta)$.  (This models the case where $I_{\ell_x}$
      is a short interval containing $1/4$ or $3/4$.)
    \end{enumerate}
  \item $(\La, g)$ contains disjoint preimages $\ell_y$ and $\ell_z$
    of $\ell_x$, each contained entirely within a connected component
    of a $\Ta$-unlinked class not coinciding with the $\Ta$-unlinked class
    which contains $\ell_x$.
  \item The finite orbits of $\mc$ and $\md$ in $\La$ are
    \emph{disjoint} from $I_{\ell_x}\cup I_{\ell_y}\cup I_{\ell_z}$
    and do not contain periodic points.
  \item Every class of $\La$ is either an image or a preimage of $\mc$
    or $\md$.
  \end{enumerate}
  Then we call $(\La, g)$ a finite \textbf{settled} lamination.
\end{defn}

The advantage of settled laminations is that for them one can define
not only $\La$-itineraries but also itineraries with respect to the
hypothetical points $x$, $y$, and $z$ represented by $I_x$, $I_y$, and
$I_z$. This can be used to locate, with any given precision, a cubic
critical portrait whose geometric lamination contains an order
isomorphic copy of $(\La, g)$. Denote by $R_{1/2}:\uc\to \uc$ the
rotation $R_{1/2}(x)=x+1/2 \mod 1$.

\begin{lem}\label{lem:init_cond1}
  Let $\Ta'$ be a cubic critical portrait and $U$ be a neighborhood of
  $\Ta'$ in the compact-unlinked topology. Then there exists a finite
  settled lamination $(\La, g)$ with critical classes $\mc$ and $\md$
  such that, if $\Ta''$ is the critical portrait of a lamination containing an order isomorphic
  copy of $(\La, g)$, then either $\Ta'' \in U$ or $R_{1/2}(\Ta'') \in U$.
  Moreover, $(\La, g)$ can be
  continued to a $\mc$-lamination.
\end{lem}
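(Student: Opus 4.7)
The plan is to translate the compact-unlinked neighborhood $U$ into a numerical precision $\delta$, to choose $\mc$ and $\md$ as precritical approximations to the critical leaves of $\Ta'$, and to insert short leaves $\ell_x$, $\ell_y$, $\ell_z$ near three $\sigma_3$-preimages of a fixed or period-$2$ point of $\sigma_3$. These short leaves will rigidify the combinatorial location of the critical leaves of any ambient invariant lamination, with a residual ambiguity corresponding exactly to the symmetry $R_{1/2}$ between the two $\sigma_3$-fixed points $0$ and $\frac12$.

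First, by Definition~\ref{cu} we extract from $U$ a compact set $X \subset \uc$ such that any cubic portrait whose critical leaves are unlinked with $X$ lies in $U$; this yields a $\delta > 0$ such that any critical portrait whose critical leaves lie within Hausdorff distance $\delta$ of those of $\Ta'$ is unlinked with $X$. Since the precritical angles (the grand $\sigma_3$-orbit of $0$) are dense in $\uc$, we choose precritical endpoints $a$, $a+\frac13$, $b$, $b+\frac13$ within $\delta$ of the endpoints of $\mc'$ and $\md'$, and set $\mc = \{a, a+\frac13\}$, $\md = \{b, b+\frac13\}$. For $\ell_x$ we split into two cases: if one of the fixed points $\{0, \frac12\}$ of $\sigma_3$ lies in $A_\mc$, take $\ell_x$ to be a very short precritical leaf whose endpoints surround this fixed point, so the local expansion of $\sigma_3$ yields $I_{\ell_x}\subset I_{g(\ell_x)} \subset A_\mc$ automatically, realizing Definition~\ref{d:settl}(1a); otherwise, a short case check using that the six period-$2$ points of $\sigma_3$ have maximal gap $\frac14$ on $\uc$ provides a period-$2$ orbit with one point in $A_\mc$ and the other in $A_\md$, and the resulting short $\ell_x$ satisfies Definition~\ref{d:settl}(1b). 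The leaves $\ell_y$, $\ell_z$ are the two $g$-preimages of $\ell_x$ lying in the other $\{\mc,\md\}$-unlinked classes, and $(\La, g)$ is completed by adjoining the (finite) forward orbits of $\mc$, $\md$, $\ell_x$, $\ell_y$, $\ell_z$ as buds; a routine check confirms all four conditions of Definition~\ref{d:settl}.

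For the pinning-down claim, let $\sim$ be an invariant lamination with critical leaves $\mc''$, $\md''$ containing an order-isomorphic copy of $(\La, g)$. The copy of $\ell_x$ inherits the condition $I_{\ell_x''}\subset I_{g''(\ell_x'')}\subset A_{\mc''}$; since an orientation-preserving expanding self-map of an arc under $\sigma_3$ has a unique fixed point, $I_{\ell_x''}$ must contain either $0$ or $\frac12$. In the first case, the precritical positions of all buds and leaves of $(\La, g)$ are preserved within $\delta$, so $\mc''$, $\md''$ are within $\delta$ of $\mc'$, $\md'$ and $\Ta''\in U$; in the second, the whole configuration is rotated by $\frac12$ and $R_{1/2}(\Ta'')\in U$. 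Finally, to continue $(\La, g)$ to a $\mc$-lamination we adjoin a chain of triples $\T^\md, g(\T^\md), \ldots, \T^l = g^r(\T^\md)$, each leaf-like and adjacent to the corresponding iterate of a precritical preimage $\mm$ of $\mc$ linking $\md'$ to $\mc'$, with the long edge of $\T^l$ adjacent to an appropriately chosen preimage $\mell$ of $\md$; all adjacency and edge-type conditions of Definition~\ref{def:c-moment} can be enforced because the precritical buds and the short leaves $\ell_x$, $\ell_y$, $\ell_z$ leave ample unlinked room for such triples.

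The main obstacle is the combination of (a) the case analysis guaranteeing that for every admissible configuration of $A_\mc$ and $A_\md$ at least one of (1a), (1b) in Definition~\ref{d:settl} is realizable, which requires tracking the positions of the fixed and period-$2$ points of $\sigma_3$ relative to the two short arcs; and (b) the careful combinatorial placement of the chain of triples continuing $(\La, g)$ to a $\mc$-lamination without introducing linking with the precritical skeleton already present. Both tasks are elementary but delicate, and the residual $R_{1/2}$-ambiguity in the conclusion is intrinsic, reflecting the fact that the settled structure cannot distinguish the two $\sigma_3$-fixed points.
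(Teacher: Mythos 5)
Your high-level plan coincides with the paper's (pin down a $\sigma_3$-fixed point via the expansion condition on $\ell_x$, invoke the $R_{1/2}$-ambiguity between $0$ and $\tfrac12$, then grow a $\mc$-lamination), but three steps are missing or would fail as written.

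The most serious gap is the assertion that an order isomorphism ``preserves precritical positions within $\delta$.'' An order isomorphism is a purely combinatorial bijection, so by itself it gives no metric control on where $\mc''$ and $\md''$ sit in $\uc$. The paper bridges this by introducing the itinerary $W(t)$ of a point with respect to the Markov partition $\{(0,\tfrac13),(\tfrac13,\tfrac23),(\tfrac23,0)\}$, and chooses $M$ (and $\Ta$) so that \emph{any} critical portrait $\Ta''$ with $W_M(\mc'')=W_M(\mc)$ and $W_M(\md'')=W_M(\md)$ already lies in $U$. The settled structure, together with the identification of the fixed point in $h(I(x))$ and of the two preimages of $0$ in $h(I(y))$, $h(I(z))$, forces the order isomorphism to match the $W_M$-itineraries. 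This chain --- order isomorphism $\Rightarrow$ matching $W_M$-itinerary $\Rightarrow$ $\Ta''\in U$ --- is exactly what turns a combinatorial statement into a metric one, and it is absent from your argument. Without it, the jump to ``within $\delta$'' is unjustified.

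Two further points would break the verification that $(\La,g)$ is settled. First, Definition~\ref{d:settl}(4) requires every class to be an image or a preimage of $\mc$ or $\md$; your $\ell_x$ is just ``a very short precritical leaf whose endpoints surround the fixed point,'' which need not be a preimage of $\mc$ or $\md$. The paper instead takes $\ell_x=\md_{-N}$, an $N$-fold pullback of $\md$ inside $A_\md$, which both surrounds $0$ and satisfies (4) automatically. Second, Definition~\ref{d:settl}(3) requires the $\La$-orbits of $\mc$ and $\md$ to avoid $I_{\ell_x}\cup I_{\ell_y}\cup I_{\ell_z}$; but your $\mc,\md$ are precritical, so their full forward orbits terminate at $\{0,\tfrac13,\tfrac23\}$, which lie precisely in $I_{\ell_x},I_{\ell_y},I_{\ell_z}$. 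You would have to truncate those orbits (as the paper does, at step $M<N$) and then, along the segment from step $M$ to step $N$, replace the bud-images of $\mc,\md$ by short \emph{leaves} $\ell(\si_3^{M+i}(\cdot))$ that are preimages of $\ell_y,\ell_z$ enclosing but not coinciding with those bud positions. The period-two case analysis for (1b) is a reasonable sketch but is likewise left at the ``short case check'' level, whereas the paper's reduction to the statement ``either one of $A_\mc,A_\md$ contains a fixed point, or each contains a period-two point'' needs to be established.
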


\begin{proof}
  Let $\Ta'=\{\mc', \md'\}$.
  There are cases to consider: either $I_{\mc'}$ contains a fixed point
  (equivalently $\si_3(\mc') \notin I_{\mc'}$), or $I_{\md'}$ contains a
  fixed point (equivalently $\si_3(\md') \notin I_{\md'}$), or $I_{\mc'}$
  and $I_{\md'}$ each contain a period two point and no fixed point
  (equivalently $\si_3(\mc') \in I_\mc'$ and $\si_3(\md') \in I_{\md'}$).
  We consider the case that $I_\md'$ contains 0; without loss of
  generality, the only other case is the third, and we leave its
  consideration to the reader.

  Given $t\in \uc$, let $W(t)=(w_0(t), w_1(t), \dots)$ be a sequence
  of arcs $(0, 1/3)$, $(1/3, 2/3)$, $(2/3, 0)$ or points $0, 1/3, 2/3$
  such that $\si_3^j(t)\in w_j(t)$ for all $j\ge 0$. The initial
  segment of $W(t)$ of length $k$ is denoted by $W_k(t)$.

  It is easy to see that there are arbitrarily large numbers $N>M$ and a
  critical portrait $\Ta =\{\mc, \md\}$ with $\mc, \md$ disjoint which
  have the following properties.

  \begin{enumerate}
  \item[(i)] The leaves $\mc$ and $\md$ are disjoint from $\{k/12,
    k=0, \dots, 11\}$.
  \item[(ii)]  $\{\si_3^N(\mc), \si_3^N(\md)\} = \{1/3, 2/3\}$.
  \item[(iii)] Any critical portrait $\Ta''$ with two critical leaves
    $\mc'', \md''$ such that $W_M(\md'')=W_M(\md)$ and
    $W_M(\mc'')=W_M(\mc)$ belongs to $U$.
  \end{enumerate}

  From now on we fix $\Ta$. Since the endpoints of $\Ta$ are strictly
  preperiodic, Theorem~\ref{kiwistruct} implies that $\sim=\sim_\Ta$
  (defined in Definition~\ref{lamtheta}) has several properties.  
  In particular, $\{0\}=\mx, \{\si_3^N(\md)\}=\my$ and
  $\{\si_3^N(\mc)\}=\mz$ are $\sim$-buds.

  Also, $\mc$ and $\md$ are $\sim$-classes.  To see this, suppose
  $\mg$ is the $\sim$-class containing $\mc$.  Since $\si_3^N(\mc) =
  \mz$ is a bud, we have that $\si^N(\mg) = \mz$, so $\mg$ is finite
  and non-periodic.  If $\si_3(\mg)$ is not degenerate, then it is at
  some point critical, and hence contains $\md$.  We then see that
  $\mg$ eventually maps to $\si_3^N(\md)=2/3$, but this is a
  contradiction, as neither $1/3$ nor $2/3$ ever maps to the other.

  \begin{figure}
    \centering
    \includegraphics{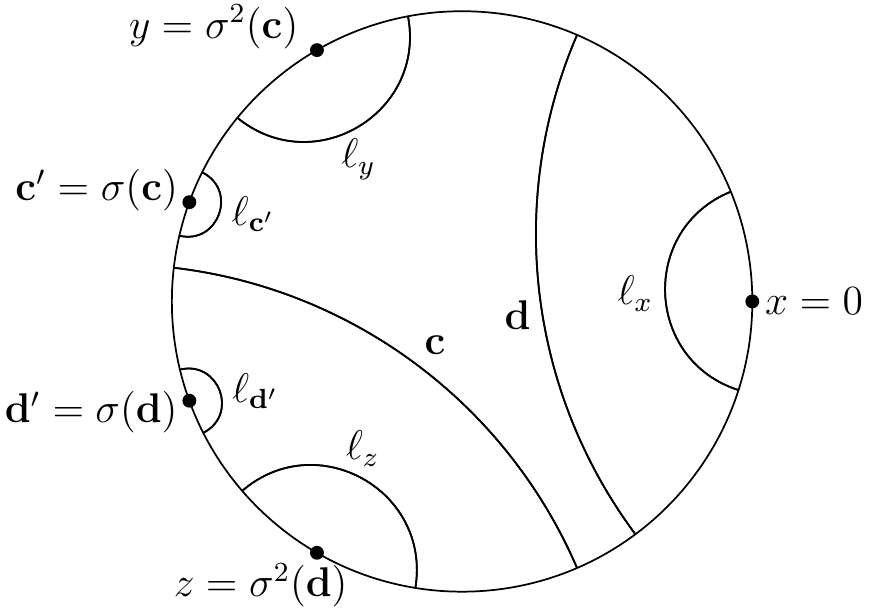}
    \caption{A $\mc$-lamination constructed as in the proof of
      Lemma~\ref{lem:init_cond1}, with $M=1$ and $N=2$.}
    \label{fig:init_cond1}
  \end{figure}

  We will now define the required settled lamination by identifying a
  finite sublamination of the lamination $\sim$. Thus, in what follows
  we will consider preimages and pullbacks of leaves in the sense of
  the lamination $\sim$. Let $\md_{-1}, \ldots, \md_{-N}$ denote
  repeated preimages of $\md$ into $A_{\md}$.  Note then that $0 \in
  I_{\md_{-N}} \subset \ldots \subset I_{\md_{-1}} \subset A_{\md}$.  It
  is also apparent that $0$ is the only point of the forward orbits of
  $\mc$ and $\md$ which lies in $I_{\md_{-N}}$ (any forward image of
  $\mc$ or $\md$ maps to
  $0$ in at most $N$ steps and no point in $I_{\md_{-N}}$ does so except for $0$).  We set $\ell_x = \md_{-N}$
  and choose preimages $\ell_y$ and $\ell_z$ of $\ell_x$ to satisfy $\my
  \in I_{\ell_y}$, $\mz \in I_{\ell_z}$.

  Let us show that $\ell_y$ is contained in one component of the
  $\Ta$-unlinked class $Q$ containing $\my$. Indeed, by way of
  contradiction suppose that $\ell_y$ intersects two components of
  $Q$. Then $\ell_y$ separates $\mc$ from $\md$. Hence
  $\si_3(\ell_y)=\ell_x$ separates $\si_3(\mc)$ from $\si_3(\md)$.
  Therefore, $I_{\ell_x}$ contains either $\si_3(\mc)$ or
  $\si_3(\md)$, a contradiction. Moreover, no point of the
  forward orbits of $\mc$ and $\md$ enters $I_{\ell_y} \cup
  I_{\ell_z}$ except for $\my$ and $\mz$.

  Since $\si^{N+1}_3(\mc)=\si^{N+1}_3(\md)=0$ and $N>M$, we can pull
  back the leaves $\ell_y$ and $\ell_z$ along the branches of the
  backward orbit of $0$ towards $\si_3^M(\md)$ and $\si_3^M(\mc)$
  respectively. Denote by $I(\si_3^{M+i}(\md))$ the pullback of
  $I_{\ell_y}$ which contains the point $\si_3^{M+i}(\md)$, and by
  $\ell(\si_3^{M+i}(\md))$ the $\sim$-leaf which has the same
  endpoints as $I(\si_3^{M+i}(\md))$. Observe that the only point of
  the orbits of $\mc, \md$ in $I(\si_3^{M+i}(\md))$ is
  $\si_3^{M+i}(\md)$. Similarly, denote by $I(\si_3^{M+i}(\mc))$ the
  pullback of $I_{\ell_z}$ which contains the point
  $\si_3^{M+i}(\mc)$, and by $\ell(\si_3^{M+i}(\mc))$ the $\sim$-leaf
  which has the same endpoints as $I(\si_3^{M+i}(\mc))$. Here $0\le
  i\le N-M$ so that $I(\ell_y)=I(\si_3^N(\md))=I(y)$,
  $I(\ell_z)=I(\si_3^N(\mc))=I(z)$. Put $I(x)=I_{\ell_x}$.

  Set
  \begin{multline*} V = I(\si^M_3(\md)) \cup I(\si^{M+1}_3(\md)) \cup
    \ldots \cup I(y) \cup \\ I(\si^M_3(\mc)) \cup I(\si^{M+1}_3(\mc))
    \cup \ldots \cup I(z) \cup I(x)
  \end{multline*}

  Note that all the intervals comprising this union are pairwise
  disjoint.  Consider the critical lamination $\La$ defined as

\[\La=\{\si^i_3(\mc), \si^i_3(\md)\}_{i=0}^M\cup \{\md_{-j}\}_{j=1}^N\cup
\{\ell(\si_3^{M+i}(\mc)), \ell(\si_3^{M+i}(\md))\}_{i=0}^{N-M}.\]

Define $\bLa$ as $\La\sm \{\si_3^M(\mc), \si_3^M(\md)\}$ and set
$g=\si_3|_{\bLa}$. This defines the settled dynamical lamination
$(\La, g)$ whose only last classes are $\si_3^M(\mc)=\mc',
\si_3^M(\md)=\md'$. Moreover, by construction $\ell(\mc')$ is a
preimage of $\md$ which is adjacent to $\mc'$ in $(\La, g)$
and which maps to $\md_{-N}$ by $\si_3^{N+1-M}$ and then to $\md$ by
$\si_3^{2N+1-M}$.

Consider a lamination $\sim$ with critical portrait $\Ta''$ containing
an order isomorphic copy of $(\La, g)$. We need to show that then
$\Ta''\in U$ or $R_{1/2}(\Ta'')\in U$. Set $\Ta''=\{\mc'',
\md''\}$. Let $h$ be the order isomorphism between $(\La, g)$ and the
appropriate finite sublamination $(\La'', \si_3|_{\bLa''})$ of
$\La_\sim$ so that $\md''=h(\md), \mc''=h(\mc)$. The dynamics of $g$
(and hence of $\si_3$ on $\La''$) implies that there is a
$\si_3$-fixed point in $h(I(x))$; without loss of generality we may
assume that this fixed point is $0$ (otherwise we will apply $R_{1/2}$
to $\sim$). Moreover, $h(I(y))$ must contain one $\si_3$-preimage of
$0$ not equal to $0$ while $h(I(z))$ contains the other
$\si_3$-preimage of $0$ not equal to $0$. Since $h$ is an order
isomorphism and $g=\si_3|_{\bLa}$, it follows that in fact $I(y)$
contains the same preimage of $0$ as $h(I(y))$, and $I(z)$ contains
the same preimage of $0$ as $h(I(z))$.

Now, the fact that $(\La, g)$ and $(\La'', \si_3|_{\bLa''})$ are order
isomorphic implies that $W_M(\mc)=W_M(\mc'')$ and
$W_M(\md)=W_M(\md'')$. By the choice of $M$ this implies that
$\Ta''\in U$ as desired.

Finally, we note that it is possible to continue $(\La, g)$ to a
$\mc$-lamination.  Indeed, $\mc'$ and $\md'$ are both adjacent to
preimages of $\md$. Moreover, it follows from the above that
the arc under $\ell(\md')$ maps onto the entire $\uc$
in the one-to-one fashion (except for its endpoints) by
$\si_3^{2N+2-M}$. This allows us to find
the $\si_3^{2N+2-M}$-preimage $\mm$ of $\mc$ adjacent to $\md'$ which does not map
by $\si_3$ under the leaf $\mell$ adjacent to $\mc'$ until it ($\mm$) maps to $\mc$ and
then to $\mc'$.
One then adds a triple $\T_1$
leaf-like and adjacent to both $\md'$ and to $\mm$ very close to $\mm$
so that the appropriate initial segment of the orbit of $\T_1$ follows the orbit of
$\mm$ until $\mm$ maps to $\mc$ and then to $\mc'$. At this moment the image of $\T_1$
is leaf-like adjacent to $\mc'$ and to $\mell$. 
The result is a $\mc$-lamination.
\end{proof}

We can now combine all of these ingredients to give a proof of our
main result.  Recall that $\WT_3$ is the family of all cubic critical
WT-portraits.

\begin{mathe}
  For each open $U\subset \A_3$ there is an uncountable set $\B
  \subset U\cap \AP_3 \cap \WT_3$ of critical portraits $\Theta$
  such that the following facts hold:

  \begin{enumerate}

  \item there exists a wandering branch point in $J_{\sim_\Ta}$ whose
    orbit is condense in $J_{\sim_\Ta}$;

  \item all maps $f_{\sim_\Ta}|_{J_{\sim_\Ta}}, \Ta\in \B,$ are
    pairwise non-conjugate;

  \item for each $\Ta\in \B$ there exists a polynomial $P_\Ta$ such
    that $P_\Ta|_{J_{P_\Ta}}$ is conjugate to
    $f_{\sim_\Ta}|_{J_{\sim_\Ta}}$.

  \end{enumerate}

\end{mathe}

\begin{proof}
  Fix an open set $U\subset \A_3$ and full sequences of precritical
  itineraries $(\tau_n^\mc)_{n=1}^\infty$,
  $(\tau_n^\md)_{n=1}^\infty$ ending in $\mc$ and $\md$
  respectively. By Lemma~\ref{lem:init_cond1} for the neighborhood
  $U$ there exists a finite settled lamination $(\La_0, g_0)$ and a
  continuation $(\La'_0, g'_0)$ of $(\La_0, g_0)$ such that
  $(\La'_0, g'_0)$ is a $\mc$-lamination. Then by
  Theorem~\ref{thm:inv_lam} there is a $\si_3$-invariant lamination
  $\sim$ continuing $(\La'_0, g'_0)$ satisfying (1).

  Let now prove claim (2) of the theorem. Let $\hat\Theta$ be the
  critical portrait of $\sim$.  Since $\sim$ continues $(\La_0,
  g_0)$, it follows by Lemma~\ref{lem:init_cond1} that $\hat \Theta
  \in U \cap \AP_3 \cap \WT_3$. Consider the set $\hat \B$ of all critical WT-portraits
  constructed in this way. It remains to show that there is an
  uncountable subcollection of $\hat \B$ such that the corresponding
  topological polynomials are pairwise non-conjugate. Recall that
  the construction of laminations is done on the basis of
  Lemma~\ref{lem:extend_lam} (see the comment after
  Lemma~\ref{lem:extend_lam}). Since the images of the points
  $\si_3(\hat \mc), \si_3(\hat \md)$ shadow longer and longer
  segments of the forward orbit of $\hat \T_1$, then the $\hat
  \Ta$-itineraries of $\hat \mc$ and $\hat \md$ are
  \emph{completely} defined by the behavior of the triangle $\hat
  \T_1$. On the other hand, the behavior of the triangle $\hat \T_1$
  is completely defined by the sequences of itineraries
  $(\tau_n^\mc)_{n=1}^\infty$ and $(\tau_n^\md)_{n=1}^\infty$. Hence
  the $\hat \Ta$-itineraries of $\hat \mc$ and $\hat \md$ are
  completely determined by by the sequences of itineraries
  $(\tau_n^\mc)_{n=1}^\infty$ and $(\tau_n^\md)_{n=1}^\infty$.

  Suppose that two pairs $X$ and $Y$ of full sequences of
  itineraries differ, and denote the critical portraits $\Ta_X=(\hat
  \mc_X, \hat \md_X)$ and $\Ta_Y=(\hat \mc_Y, \hat \md_Y)$ as
  constructed above. Let us show that the itineraries of $\si_3(\hat
  \mc_X)$ and $\si_3(\hat \mc_Y)$ in their respective laminations
  are distinct. Without loss of generality, suppose that for some
  integer $n$ the itineraries of $g^X_n(\mc_X)$ and $g^n_Y(\mc_Y)$
  coincide until a certain moment when the following holds:

  \begin{enumerate}

  \item the triangles $\T^l(\La_n^X, g_n^X)$ and $\T^l(\La_n^Y,
  g_n^Y)$ are adjacent to $g^n_X(\mc_X)$ and $g^n_X(\mc_Y)$, respectively;

  \item the triangles $\T^l(\La_n^X, g_n^X)$ and $\T^l(\La_n^Y,
  g_n^Y)$ are also adjacent to $\mell(\La_n^X, g_n^X)$ and $\mell(\La_n^Y, g_n^Y)$
  which they will shadow according to the construction;

  \item the itineraries of $\mell(\La_n^X, g_n^X)$ and $\mell(\La_n^Y, g_n^Y)$
  are distinct because they were constructed using distinct
  itineraries from $X$ and $Y$.
  \end{enumerate}

  Hence, the itineraries of $\si_3(\hat \mc_X)$ and $\si_3(\hat
  \mc_Y)$ are also distinct. This implies that if $X\ne Y$, then
  $\Ta_X\ne \Ta_Y$ and the critical itineraries of $\Ta_X$ and
  $\Ta_Y$ are distinct. Therefore in any open set in $\A_3$ there
  are uncountably many critical WT-portraits (as there are
  uncountably many distinct pairs of full sequences of itineraries
  with each itinerary ending in $\mc$ or $\md$). In terms of their
  topological polynomials, $f_{\Ta_X}$ and $f_{\Ta_Y}$ may yet be
  conjugate, but can only differ by the labeling of their critical
  leaves.  There is therefore an uncountable subset of critical
  portraits in $U \cap \A_3 \cap \WT_3$ whose induced topological
  polynomials on their topological Julia sets are pairwise
  non-conjugate. This completes the proof of (2).

  To prove (3) observe that by Kiwi's results (see
  Theorem~\ref{kiwistruct}) for each topological polynomial $f$
  constructed above there exists a complex polynomial $P$ which is
  monotonically semiconjugate to the $f$ on its Julia set. By (1)
  all these topological polynomials $f$ have points with condense
  orbits in their topological Julia sets. Therefore by
  \cite[Theorem 3.6(1)]{bco11} $P$ is conjugate to $f$ as desired.
\end{proof}

\end{document}